\documentclass[12pt]{article}
\usepackage{amsmath,amssymb,amsthm,mathtools}
\usepackage{url}
\usepackage{xcolor}

\usepackage[hidelinks]{hyperref}
\hypersetup{pdftitle={Counterexamples to Hardy--Littlewood maximal inequalities on solvable groups}}

\newtheorem{maintheorem}{Theorem}

\newtheorem{theorem}{Theorem}[section]
\newtheorem{proposition}[theorem]{Proposition}
\newtheorem{lemma}[theorem]{Lemma}
\newtheorem{corollary}[theorem]{Corollary}
\theoremstyle{remark}

\numberwithin{equation}{section}

\newcommand{\LL}{\mathcal L}
\newcommand{\Sol}{\operatorname{Sol}}
\newcommand{\BS}{\operatorname{BS}}

\newcommand{\one}{\mathbf 1}
\newcommand{\R}{\mathbb R}
\newcommand{\Z}{\mathbb Z}
\newcommand{\Nzero}{\mathbb N_0}
\newcommand{\Ftwo}{\mathbb Z/2\mathbb Z}

\newcommand{\1}{\mathbf{1}}

\title{Counterexamples to Hardy--Littlewood maximal inequalities
on solvable groups}
\author{Koji Fujiwara
\thanks{supported by JSPS Grant-in-Aid for Scientific Research 25H00588}
}
\date{}

\begin{document}
\maketitle

\begin{abstract}
We prove that the centered Hardy--Littlewood maximal operator is not of
weak type $(p,p)$ for any $1\le p<\infty$ in three solvable settings:
the lamplighter group $(\Z/2\Z)\wr\Z$ with the
switch--walk--switch word metric, the Baumslag--Solitar groups
$\BS(1,n)$, $n\ge2$, with their  digit word metrics, and the
three-dimensional Sol group 
with any left-invariant Riemannian metric and its Riemannian volume.

To the best of our knowledge, we give the first examples of
finitely generated groups, equipped with word metrics and counting
measure, for which the centered Hardy--Littlewood maximal operator
fails to be of weak type $(1,1)$.

\end{abstract}



\section{Introduction}

\subsection{Background}
Let $(X,d,\mu)$ be a metric measure space in which every ball of positive
radius has positive finite measure. We write
$B_d(x,r)=\{y:d(x,y)\le r\}$ and consider the centered Hardy--Littlewood
maximal operator
\begin{equation}\label{eq:maximal-general}
 M_df(x)=\sup_{r>0}\frac{1}{\mu(B_d(x,r))} \int_{B_d(x,r)}|f(y)|\,d\mu(y).
\end{equation}

For $1\le p<\infty$, {\it weak type} $(p,p)$ means that a finite constant $C_p$
exists such that
$$
 \mu\{x:M_df(x)>\lambda\}
 \le C_p\lambda^{-p}\int_X|f|^p\,d\mu
 \qquad(f\in L^p(X,\mu),\ \lambda>0).
$$
When $\mu$ is counting measure, the integral is a sum and we write
$\ell^p$ in place of $L^p$.

Throughout the paper, $\Nzero=\{0,1,2,\ldots\}$.
For a group $G$ with a finite symmetric generating set $S$, let
\[
 |g|_S=\min\{k\in\Nzero:g=s_1\cdots s_k,\ s_i\in S\},
\]
be the word length of $g$. We define $|e|_S=0$.
 The associated left-invariant
word metric and  the ball of radius $r\ge0$  centered at $g$ are defined by
\[
 d_S(g,h)=|g^{-1}h|_S,\quad
 B_S(g,r)=\{h\in G:d_S(g,h)\le r\}.
\]
We have
$B_S(g,r)=gB_S(e,r)$ and $|B_S(g,r)|=|B_S(e,r)|$.
We write $M_S$ for the operator in \eqref{eq:maximal-general} with this
metric and counting measure. 

\bigskip

We say that $\mu$ is {\it doubling} if there is a constant $C$ such that
\[
\mu(B_d(x,2r))\le C\,\mu(B_d(x,r))
\quad (x\in X,\ r>0).
\]
The doubling condition gives weak type $(1,1)$ by a standard argument using the Vitali covering lemma. This includes Euclidean spaces and finitely generated groups of
polynomial growth with word metrics and counting measure. 

Among spaces of exponential growth, 
Str\"omberg proved weak type \((1,1)\) for the centered Hardy–Littlewood maximal operator on noncompact symmetric spaces, with their Riemannian metrics and Riemannian volume measures ~\cite{Stromberg}. Anker, Damek, and Yacoub established the
corresponding result on harmonic $AN$ groups~\cite[Corollary (3.22)]{AnkerDamekYacoub}.

Gaudry, Giulini, Hulanicki, and Mantero~\cite{Gaudry}
proved weak type $(1,1)$ for maximal operators defined using
certain families of rectangles on solvable Lie groups.
They asked whether a similar result
holds for Riemannian balls with respect to some left-invariant
Riemannian metric.

Corollary~\ref{cor:sol-all-riemannian} gives a negative answer
to this question in general: on $\mathrm{Sol}$, the centered
maximal operator is not of weak type $(1,1)$ for any
left-invariant Riemannian metric.

In the discrete setting, regular trees satisfy weak type \((1,1)\); this follows from work of Rochberg and Taibleson~\cite{RT}, and was later proved directly by Naor and Tao ~\cite[Theorem~1.5]{NaorTao}.
 More recently, Nevo and the author~\cite{FujiwaraNevo} proved weak type \((1,1)\) for every word metric on a non-elementary word-hyperbolic group. 

For broader background on Hardy–Littlewood maximal inequalities in metric measure spaces, see~\cite{NaorTao}.

Counterexamples are known among infinite graphs and Riemannian manifolds;
see, for example, \cite[Section 3.4]{SoriaTradacete} and \cite[Section 7]{MedaPigolaSettiVeronelli}, respectively.

By contrast, we will give counterexamples in homogeneous settings. We consider Cayley graphs with counting measure and connected Lie groups with left-invariant Riemannian metrics and Riemannian volume. In both cases, all balls of a given radius have the same measure.

\subsection{Main results}
The {\it lamplighter group} is the restricted wreath product
\[
\mathcal L=(\mathbb Z/2\mathbb Z)\wr \mathbb Z
=\left(\bigoplus_{n \in \mathbb Z}\mathbb Z/2\mathbb Z\right)\rtimes \mathbb Z,
\]
where \(\mathbb Z\) acts by shifting the coordinates. 

For $i\in\Z$, let $\delta_i\in \bigoplus_{n}\mathbb Z/2\mathbb Z$ be the indicator of $\{i\}$,
with values in $\Ftwo$. Set $a=(\delta_0,0)$ and $t=(0,1)$.
 One may interpret \(a\) as
switching the lamp at the current position of the lamplighter, while \(t\) moves the
lamplighter one step to the right.
We use the {\it switch–walk–switch} generating set
\[
S_{\mathrm{sws}}=\{t,at,ta,ata\}^{\pm1}.
\]

The lamplighter group is metabelian and amenable, but has exponential
growth. It is finitely generated, but 
not finitely presented.
See Section~\ref{sec:sws} for a detailed description of $\mathcal L$ and this generating
set.

\begin{maintheorem}[The lamplighter group]\label{thm:lamplighter}
Let $\mathcal L$ be equipped with counting measure and with the
switch--walk--switch word metric associated with $S_{\mathrm{sws}}$.
Then the centered Hardy--Littlewood maximal operator is not of weak
type $(p,p)$ for any $1\le p<\infty$.  In particular, the operator is unbounded on $\ell^p(\mathcal L)$
for every finite $p\ge1$.
\end{maintheorem}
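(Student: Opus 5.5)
The plan is to construct, for each large parameter $n$, a finite set $E_n\subset\LL$ and a level $\lambda_n$. With $f=\one_{E_n}$, I want to show that
\[
|\{x: M_S\one_{E_n}(x)>\lambda_n\}|\cdot\lambda_n^{p}\,/\,|E_n|\longrightarrow\infty
\]
for every fixed $p\ge1$. Since $\|\one_{E_n}\|_p^p=|E_n|$, this contradicts weak type $(p,p)$. Unboundedness on $\ell^p$ then follows, because strong type implies weak type.

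The first step is an explicit formula for $|g|_{S_{\mathrm{sws}}}$. Write $g=(\phi,m)$. Each generator moves the lamplighter by $\pm1$ and may toggle the lamps at both endpoints of the step, so toggling costs nothing beyond visiting a site. Hence $|g|$ is the length of the shortest lattice path from $0$ to $m$ that covers $[\min(\operatorname{supp}\phi\cup\{0,m\}),\max(\operatorname{supp}\phi\cup\{0,m\})]$, up to an additive $O(1)$ at the endpoints. Equivalently, the Cayley graph is the Diestel--Leader graph $DL(2,2)$. From this formula I will derive the ball asymptotics $|B_S(e,r)|\asymp 2^r$. More precisely, I will show that a proportion $1-O(2^{-a})$ of $B_S(e,r)$ consists of elements whose path backtracks by at most $a$. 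So the mass of a ball sits near two ``ends'': lamplighter near $+r$ with lamps in $[0,r]$, or lamplighter near $-r$ with lamps in $[-r,0]$. This concentration of ball mass near its two ends is the structural fact the counterexample exploits.

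The second step is the test configuration. A natural first choice is
\[
E_n=\{(\phi,n):\operatorname{supp}\phi\subset[0,n]\},
\qquad |E_n|\asymp 2^n .
\]
The test points are $x=(\psi,-j)$ with $\operatorname{supp}\psi\subset[-j,n]$. Then $E_n\subset B_S(x,n+j)$, and by the first step this ball has size $\asymp 2^{n+j}$, so $M_S\one_{E_n}(x)\gtrsim 2^{-j}$. This alone only gives a level set of size $\asymp 2^{n+k}$ at $\lambda=2^{-k}$, which matches the weak $(1,1)$ bound. So the configuration has to be refined so that the ends of the balls are used more efficiently.

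The refinement I will try is to place $E_n$ so that it lies in the dominant end of balls centred at test points. Concretely, $E_n$ should be the set of elements at one end of a long interval with a prescribed block of lamps. The test points should then range over an independent family that is long in the \emph{other} tree direction of $DL(2,2)$: arbitrary lamps on a long interval on the far side, and lamplighter positions varying in a window of length $\ell$. Each test point sees $E_n$ inside a ball of radius $r$ with $|E_n\cap B|/|B|\ge c\,2^{-k}$, while the test points, summed over the $\ell$ admissible windows, number $\gtrsim \ell\,2^{k}|E_n|$. Letting $\ell=\ell(n)\to\infty$ defeats weak $(1,1)$. For $p>1$ I will choose $k$ and $\ell$ jointly, or superpose dyadically weighted copies $f=\sum_k 2^{-k/p}\one_{E_{n,k}}$ on disjoint translates, to defeat weak $(p,p)$.

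I expect the main obstacle to be exactly this gain of an unbounded factor $\ell$. It requires showing that a single set $E_n$ occupies a fixed fraction of the \emph{dominant} part of many essentially different balls, which must be genuinely different and not mere translates along the ``cheap'' direction that the counting in the naive configuration already exhausts. The delicate computation is the lower bound for $|E_n\cap B_S(x,r)|$, together with the precise upper bound on $|B_S(x,r)|$ including the backtracking tail, uniformly over the family of test points. I will carry it out directly from the path-length formula for $|g|_{S_{\mathrm{sws}}}$, as a careful count of lamp configurations with prescribed visited interval.
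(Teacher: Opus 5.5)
There is a genuine gap: the proposal never produces sets that defeat weak type. The only configuration you describe concretely, $E_n=\{(\phi,n):\operatorname{supp}\phi\subset[0,n]\}$ with centers $(\psi,-j)$, $\operatorname{supp}\psi\subset[-j,n]$, is, as you note yourself, consistent with weak $(1,1)$. The ``refinement'' that should produce the unbounded factor $\ell$ is described only in words (``one end of a long interval with a prescribed block of lamps'', ``long in the other tree direction'', ``windows of length $\ell$''). It gives no definition of $E_n$, of the test set, or of the radii. The two estimates that carry the argument, $|E_n\cap B|/|B|\ge c2^{-k}$ and $\#\{\text{test points}\}\gtrsim \ell 2^k|E_n|$, are stated as goals and not proved; you yourself flag them as the main obstacle. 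The case $p>1$ is likewise left at ``choose $k,\ell$ jointly, or superpose''. As written, there is no proof.

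Moreover, your diagnosis that the naive $E_n$ is exhausted is wrong. It is exactly the paper's $H_N$, and it works once the centers and radii are chosen correctly. You placed the centers below level $0$. There, any path to $E_n$ spends $j$ steps on sites where $E_n$ prescribes the lamps, so those steps enlarge the ball without adding points of $E_n$. Instead, take centers $x=(\eta,j)$ with $\operatorname{supp}\eta\subset[0,N]$ and $0\le j\le N$, i.e.\ between the bottom and top of the lamp window, and use the radius $d=N-j$. The $2^{d+1}$ words $(a^{\varepsilon_0}t)\cdots(a^{\varepsilon_{d-2}}t)(a^{\varepsilon_{d-1}}ta^{\varepsilon_d})$ have length $d$ in $S_{\mathrm{sws}}$. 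They carry $x$ to distinct points of $H_N$, because the toggled sites $j,\ldots,N$ stay inside $[0,N]$. With the sharp bound $|B_{\mathrm{sws}}(d)|\le 32\cdot 2^d$, which your first step would supply, this gives $M_{\mathrm{sws}}\one_{H_N}(x)\ge 1/16$. Each of the $N+1$ levels carries $2^{N+1}=|H_N|$ such centers, so $|\{M_{\mathrm{sws}}\one_{H_N}>1/32\}|\ge (N+1)|H_N|$. This is your factor $\ell$ with $k=0$, at a fixed level $\lambda$, so it handles every finite $p$ at once and makes dyadic superposition unnecessary. A minor point: the Cayley graph for $S_{\mathrm{sws}}$ is $8$-regular, so it is not $DL(2,2)$, which is the Cayley graph for $\{t,ta\}^{\pm1}$. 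This does not affect your path-length description of $|g|_{\mathrm{sws}}$.
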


Our second discrete example is the family of solvable
Baumslag--Solitar groups
\[
\operatorname{BS}(1,n)
   =\langle a,t\mid tat^{-1}=a^n\rangle,
\qquad n\ge2.
\] 
The letters $a$ and $t$ here are unrelated to the elements denoted by the same letters in $\mathcal L$. We use the {\it digit} generating set
\[
 D_n=\{a^\varepsilon t:0\le\varepsilon<n\}^{\pm1}.
\]
Each  $\mathrm{BS}(1,n)$ is finitely presented, torsion-free, and metabelian.

\begin{maintheorem}[Solvable Baumslag--Solitar groups]
\label{thm:baumslag}
For every \(n\ge2\), equip \(\mathrm{BS}(1,n)\) with counting measure and the word metric associated with \(D_n\).
 Then the centered Hardy--Littlewood maximal operator 
 is not of weak type $(p,p)$ for any $1\le p<\infty$.
In particular, the operator is
unbounded on $\ell^p(\mathrm{BS}(1,n))$ for every finite $p\ge1$.
\end{maintheorem}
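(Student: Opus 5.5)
Since the operator is sublinear and dominated by $\sup_r$ of averages of $|f|$, it suffices to find, for each $N$, a finite set $E_N\subset \BS(1,n)$ and a much larger set $F_N$ such that $M_{D_n}\one_{E_N}\ge c$ on $F_N$ with $c>0$ independent of $N$ and $|F_N|/|E_N|\to\infty$. Then with $f=\one_{E_N}$ and $\lambda=c/2$, weak type $(p,p)$ would force $|F_N|\le C_p(2/c)^p|E_N|$ for every $N$, which is a contradiction for every $1\le p<\infty$ simultaneously. So the plan is to find a geometric "funnel" in the Cayley graph of $D_n$.

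The structure I would use is the standard one for the digit generators. Each element of $\BS(1,n)$ acts on $\R$ as an affine map $x\mapsto n^k x+b$ with $b\in\Z[1/n]$. A word in $D_n$ is a sequence of digit moves: the generator $a^\varepsilon t$ adds a base-$n$ digit and changes height by one. This makes the Cayley graph resemble a horocyclic product of the $(n+1)$-regular Bass--Serre tree with a discretized hyperbolic plane, much as the switch--walk--switch lamplighter Cayley graph is a Diestel--Leader graph $\mathrm{DL}(2,2)$. First I will establish an explicit word-length formula, or at least two-sided estimates with additive error $O(1)$: $|g|_{D_n}$ for $g=(k,b)$ is approximately the length of a shortest up-down path. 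The cost of this path is governed by the number of base-$n$ digits of $b$ to the left and right of the radix point, together with $k$. From this I will compute $|B(e,r)|\asymp n^{r}$, possibly with a polynomial factor. Where exactness matters, I will use the formula directly rather than coarse estimates.

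The construction I would try is the following. Let $E_N$ be the sphere-like set $\{g: |g|\le N,\ \text{height}(g)= N\}$, meaning the elements reached by pushing "up" in the tree direction. Let $F_N$ be the set of points at height roughly $-N$ whose ball of radius comparable to $N$ captures a fixed proportion of $E_N$. The heuristic is the Diestel--Leader one. A ball $B(x,r)$ in a horocyclic product has most of its mass near the top or bottom extreme heights reachable. Hence a point $x$ at height $h$ sees, at radius $r$, a ball whose mass concentrates where the height differs from $h$ by about $r/2$ in the expanding direction. I will choose $E_N$ to be a "box" of size about $n^N$ occupying that concentrated region for many centers $x$ at once, while the centers form a set of size $N\cdot n^N$ or $n^{N+\text{something}}$. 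Concretely, I expect $E_N$ to consist of elements with prescribed digits in a window. The centers $x$ in $F_N$ vary in the heights and the digits outside that window, and each center satisfies $|B(x,r_x)\cap E_N|\ge c|B(e,r_x)|$ at a suitable $r_x$. The gain $|F_N|/|E_N|\to\infty$ should come from the extra freedom in the height parameter, giving a factor of order $N$.

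The main obstacle is the ball-counting step. I must show that for each $x\in F_N$, a positive proportion of $B(x,r_x)$ lies in $E_N$, uniformly in $N$. This needs the precise shape of balls for $D_n$, not just volume growth, and the digit metric's carries in base $n$ complicate the exact word-length formula. I would first prove the length formula for elements whose translation part has digits in a standard range without carries. I would then restrict $E_N$ and $F_N$ to such "carry-free" configurations, counting only the portion of the ball that is carry-free and showing it is still a fixed fraction of $|B(e,r)|$. If exact control fails, I would fall back on showing the ratio is at least $c/N^{\alpha}$, and then make $|F_N|/|E_N|$ grow faster than $N^{\alpha}$.
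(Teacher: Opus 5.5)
Your reduction (find $E_N,F_N$ with $M_{D_n}\one_{E_N}\ge c$ on $F_N$ and $|F_N|/|E_N|\to\infty$) is exactly the paper's mechanism. You are also right that the gain should be a factor of order $N$ coming from the height. But the concrete construction you propose fails, and the step that actually carries the proof is left open.

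Your $E_N=\{g:|g|_{D_n}\le N,\ \text{height }N\}$ is $\{(k,N):0\le k<n^N\}$. Every generator changes the height by exactly one, so a center at height about $-N$ needs radius $r\ge 2N$ to meet $E_N$. Meanwhile $B_{D_n}(r)$ contains the $n^r$ distinct monotone products $(a^{\varepsilon_0}t)\cdots(a^{\varepsilon_{r-1}}t)=(\sum_i\varepsilon_i n^i,r)$. Hence every average is at most $n^N/n^{2N}$. Your heuristic is off here too: for $D_n$ the bulk of $B(x,r)$ lies near heights $\mathrm{ht}(x)\pm r$, not at height difference $r/2$.

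The useful centers lie between the base and the target. The paper takes $V_N=\{(x,j):0\le x<n^N,\ 0\le j\le N\}$ and $H_N=\{(x,N):0\le x<2n^N\}$. From $(x,j)$, the $n^{N-j}$ monotone upward words of length $r=N-j$ all land in $H_N$. No word-length formula, carry analysis, or description of the shape of balls is needed for this lower bound.

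The genuine gap is the denominator. You need $|B_{D_n}(r)|\le C_n n^r$ with \emph{no} polynomial factor, and you leave a factor $r^\alpha$ open. Your fallback does not rescue this. From height $N-r$ the average is then only of order $r^{-\alpha}$, while the gain $|F_N|/|E_N|$ is linear in the number of levels, which is the same depth parameter that produces the loss. For weak type $(p,p)$ you would need the gain to beat $N^{\alpha p}$ for every $p$, which a linear gain cannot do unless $\alpha=0$. For $\alpha\ge1$, as with the easy bound $O(rn^r)$, even $p=1$ is lost.

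The missing idea is the paper's sharp count. Take a geodesic word ending at height $j\ge0$, and let $[m,M]$ be its range of heights, of width $w$.
\begin{itemize}
\item Since the path visits both $m$ and $M$, $2w-j\le r$, which leaves at most $(r-w+1)^2$ pairs $(m,j)$.
\item Moreover $x\in n^m\Z$ and $|x|/n^m\le n^w-1+(r-w)(n-1)n^{w-1}\le (r-w+1)n^w$. One chosen crossing of each of the $w$ levels contributes a base-$n$ digit, and each of the at most $r-w$ extra crossings contributes at most $(n-1)n^{w-1}$.
\item The case $j<0$ follows by inversion.
\end{itemize}
Summing $2(r-w+1)^3n^w$ over $w$ gives $C_n n^r$. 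The polynomial weight sits in $r-w$, not in $r$, so the geometric decay in $r-w$ absorbs it.
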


To the best of our knowledge, Theorems~A and~B give the first examples
of finitely generated groups, equipped with word metrics and counting
measure, for which the centered Hardy--Littlewood maximal operator
fails to be of weak type $(1,1)$.

\bigskip

Our third example is a continuous one, the Lie group $\mathrm{Sol}$.
Let $\mathrm{Sol}=\mathbb R^3$ with the group
law
\begin{equation}\label{eq:sol-law}
(x,y,z)(a,b,c)
   =\bigl(x+e^za,\;y+e^{-z}b,\;z+c\bigr).
\end{equation}
Equivalently, $\mathrm{Sol}=\mathbb R^2\rtimes_\alpha\mathbb R$,
where $z\in\mathbb R$ acts on $\mathbb R^2$ by
$\alpha_z(a,b)=(e^za,e^{-z}b)$.

Equip $\mathrm{Sol}$ with the Riemannian metric
\begin{equation}\label{eq:sol-metric}
ds^2=e^{-2z}\,dx^2+e^{2z}\,dy^2+dz^2.
\end{equation}
Its Riemannian volume measure is
$
d\mu=dx\,dy\,dz,
$
and this is both  a left and a right Haar measure. The group
$\mathrm{Sol}$ is also metabelian and amenable. The geometric analogy
between $\mathrm{Sol}$ and lamplighter groups is familiar in geometric
group theory; see \cite{EskinFisherWhyte}.

\begin{maintheorem}[The $\mathrm{Sol}$ group]\label{thm:sol}
Let $d$ be the Riemannian distance determined by the metric above,
and let $\mu$ be Riemannian volume. The centered Hardy--Littlewood
maximal operator $M_d$ on $\mathrm{Sol}$ is not of weak type $(p,p)$
for any $1\le p<\infty$. In particular, $M_d$ is unbounded on
$L^p(\mathrm{Sol},\mu)$ for every finite $p\ge1$.
\end{maintheorem}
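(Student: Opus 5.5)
Since the measure is a Haar measure and every ball of radius $r$ has the same volume $V(r)=\mu(B_d(e,r))$, it suffices to find, for each $N$, a test function $f_N$ and a level $\lambda_N$ such that
\[
\mu\{M_df_N>\lambda_N\}\,\lambda_N^p/\|f_N\|_p^p \to\infty .
\]
I would take $f_N=\one_{B_d(e,1)}$, the indicator of a fixed unit ball, so that $\|f_N\|_p^p\asymp 1$ for every $p$. Then
\[
M_df(g)\ge \frac{\mu(B(g,r)\cap B(e,1))}{V(r)}\gtrsim V(d(e,g)+1)^{-1}.
\]
This bound alone is useless, because the sphere of radius $R$ has measure about $V(R)$. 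The point is to exploit the non-round shape of Sol balls.

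The plan is to use the horocyclic geometry. The ball $B(e,R)$ contains the points $(x,0,0)$ with $|x|\lesssim e^{R/2}$. Such a point is reached by going up to height $z\approx \log|x|$, moving horizontally a bounded amount, and coming back down. Consequently the ball is not comparable to its "core". Its volume behaves like $V(R)\asymp e^{R}$, obtained by integrating over heights $|z|\le R$ the fibre areas $e^{z}\cdot e^{R-|z|}$ and so on.

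The key estimate I would aim for is the following. There is a set $E_R$, consisting of points $g$ at distance about $R$ from $e$, with $\mu(E_R)\gtrsim R\,e^{R}$, such that for each $g\in E_R$ some ball $B(g,r)$ with $r\le R+O(1)$ contains $B(e,1)$. Indeed, a point $g=(x,y,z)$ with $|z|\le R/2$, $|x|\le e^{z}e^{R/2}$, and $|y|\le e^{-z}e^{R/2}$ roughly has distance $\le R$ from $e$ after optimizing routes. The volume of such points integrates over $z$ to a quantity of order $R\,e^{R}$, which exceeds the measure $e^{R}$ of a single ball by a factor $R$. For these $g$ we get $M_df(g)\gtrsim e^{-R}=:\lambda_R$. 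Hence
\[
\mu\{Mf>\lambda_R\}\,\lambda_R^p\gtrsim R\,e^{R}e^{-pR},
\]
which only helps when $p=1$. For $p>1$ one must instead use the $L^p$ scaling, so I would refine the construction.

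For general $p$, I would take $f=\one_{A}$, where $A$ is a large "box" $\{|x|\le e^{L},\ |y|\le e^{L},\ |z|\le 1\}$ of measure about $e^{2L}$. I would then show that $M_df\ge c$, for a fixed constant $c$, on a set of measure $\gg e^{2L}$. Points $g=(x,y,z)$ at height $z\approx \pm L$, lying above the box, have balls of radius about $L$ that contain a positive proportion of $A$ compared with their own volume $e^{L}$: the box is thin in one direction relative to the stretched metric at height $z$. The set of such $g$ with $|z-L|\le L/2$ has volume comparable to $L\,e^{2L}$ or more. Taking $\lambda=c/2$ then gives
\[
\mu\{Mf>\lambda\}\ge C L\,\|f\|_p^p,
\]
uniformly in $p$, which proves failure of weak type $(p,p)$ for every $p$.

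The main obstacle is this distance estimate, namely proving that $\mu(B(g,r)\cap A)\ge c\,V(r)$ for all $g$ in a large set. This requires sharp two-sided bounds for $d$ in Sol: I would need $d((x,y,z),(x',y',z'))\approx |z-z'|+2\log^+(\ldots)$ up to additive constants, and the corresponding volume estimate $V(r)\asymp e^{r}$. I would first prove these comparisons explicitly by exhibiting piecewise geodesic paths for the upper bound and using the projection to $z$ plus horizontal length for the lower bound.
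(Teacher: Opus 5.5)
Your second construction is the right one, and it is the paper's mechanism turned upside down. The paper puts the target $H_N=[-2e^N,2e^N]\times[-2,2]\times[N,N+1]$ on top and takes centers in the column $V_N=[-e^N,e^N]\times[-1,1]\times[0,N]$. From a center at height $z_0$ it climbs by $t=N-z_0$, using that $Q_t=[-e^t,e^t]\times[-1,1]\times[t,t+1]$ (volume $4e^t$) lies in $B_d(o,t+3)$ by an explicit three-segment path. Your descending version is the image of this under the isometric automorphism $(x,y,z)\mapsto(y,x,-z)$, and fattening the thin direction to width $e^L$ is harmless.

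The genuine gap is the step that carries the argument: the sharp bound $\mu(B_d(o,r))\le Ce^r$ with no polynomial loss. If you only had $Cre^r$, you would get $M\one_A\gtrsim 1/t$ at height difference $t$. You could then only show $\mu\{M\one_A>\lambda\}\gtrsim\min(L,\lambda^{-1})e^{2L}$, which is $\lesssim\lambda^{-p}\mu(A)$ for $\lambda\le1$ and so contradicts nothing. You do not prove the sharp bound, and the tools you indicate do not give it. The $1$-Lipschitz projections to $z$ and to the hyperbolic planes in $(x,z)$ and $(y,z)$ only confine $B_d(o,r)$ to a set of volume $\asymp re^r$: at height $z\in[0,r]$ the fibre is roughly $e^{(r+z)/2}\times e^{(r-z)/2}$. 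Your own fibre heuristic $\int_{|z|\le R}e^{z}e^{R-|z|}\,dz$ also gives $Re^R$, not $e^R$. You would need a genuinely additive coarse distance formula, proved carefully. Alternatively, as the paper does, use Schwartz's sphere-area bound (the sphere of radius $r$ has area at most $20\pi e^r$ for large $r$) and integrate it via the coarea formula. By contrast, the lower bound $\mu(B(g,r)\cap A)\gtrsim e^r$ that you call the main obstacle needs no distance formula, only an explicit path.

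Two further errors must be fixed. First, your construction with $f=\one_{B(e,1)}$ fails even for $p=1$. Every $g$ with $B(g,r)\supset B(e,1)$ for some $r\le R+O(1)$ lies in $B(e,R+O(1))$, which has measure $\asymp e^R$, so no $E_R$ with $\mu(E_R)\gtrsim Re^R$ exists. Indeed, $(e^{R/2},e^{R/2},0)$ is at distance about $2R$ from $e$, not $R$. Drop this construction.

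Second, replace the column $|z-L|\le L/2$ by $0\le z\le L$. For a center $g=(x_0,y_0,z_0)$, descending by $z_0$ reaches $[x_0-e^{z_0},x_0+e^{z_0}]\times[y_0-1,y_0+1]\times[-1,0]$ within radius $z_0+3$. When $z_0>L$, its $x$-width $2e^{z_0}$ exceeds the $x$-width $2e^L$ of $A$, so at most a fraction $e^{L-z_0}$ of it lies in $A$. When $0\le z_0\le L$, $|x_0|\le e^L$ and $|y_0|\le e^L-1$, at least volume $2e^{z_0}$ of it lies in $A$. This column still has measure $\asymp Le^{2L}$ against $\mu(A)=8e^{2L}$.
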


The same conclusion holds for every left-invariant Riemannian metric
on $\mathrm{Sol}$, with its Riemannian volume;
see Corollary~\ref{cor:sol-all-riemannian}.

\subsection{Common mechanism}
In contrast with the counterexamples mentioned above, all three of our examples are homogeneous.
For the metrics in Theorems~\ref{thm:lamplighter}, \ref{thm:baumslag}, \ref{thm:sol},
all three examples have pure exponential ball growth:
the measures of balls of radius $r$ are comparable to
$2^r$, $n^r$, and $e^r$, respectively,
for all sufficiently large $r$.

For all three examples, the failure of weak type follows from the same elementary criterion.

\begin{lemma}[Mechanism  for failure of weak type]\label{lem:fixed-level}
Let $(X,d,\mu)$ be a metric measure space in which every metric ball
has positive finite measure, and let $M_d$ denote the centered
Hardy--Littlewood maximal operator. Suppose that there exist a
constant $c>0$ and sequences of measurable sets  $H_N,V_N\subseteq X$
with 
$
0<\mu(H_N),\mu(V_N)<\infty
$ such that 
$$
M_d\1_{H_N}\ge c\ \hbox{on }V_N,
\quad
\frac{\mu(V_N)}{\mu(H_N)}\longrightarrow\infty.
$$

Then $M_d$ is not of weak type $(p,p)$ for any $1\le p<\infty$.
Consequently, it is unbounded on $L^p(X,\mu)$ for every finite
$p\ge1$.
\end{lemma}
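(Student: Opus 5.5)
Test the weak type inequality on the indicator functions $f=\1_{H_N}$ and argue by contradiction. Suppose $M_d$ were of weak type $(p,p)$ for some $1\le p<\infty$, with constant $C_p$. Since $0<\mu(H_N)<\infty$, the function $\1_{H_N}$ lies in $L^p(X,\mu)$ and
\[
\int_X \1_{H_N}^p\,d\mu=\mu(H_N).
\]
Take $\lambda=c/2$, so that $\lambda<c$ and in particular $M_d\1_{H_N}>\lambda$ wherever $M_d\1_{H_N}\ge c$. The hypothesis then gives $V_N\subseteq\{x: M_d\1_{H_N}(x)>c/2\}$. Using a strict threshold below $c$ avoids any issue with the non-strict inequality in the hypothesis versus the strict one in the definition of weak type.

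Before applying $\mu$ to both sides, I would note that the level set is measurable. For $f\ge0$ this is standard: the supremum over $r>0$ can be reduced to a countable family, or one checks lower semicontinuity in $x$ for each fixed $r$ after passing to limits of radii. In the discrete settings it is automatic. If one prefers to avoid the issue entirely, one can work with outer measure, since $\mu(V_N)$ is defined and monotonicity is all that is needed. This is the only point requiring any care, and I expect it to be a minor one.

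Combining these facts gives
\[
\mu(V_N)\le \mu\{M_d\1_{H_N}>c/2\}\le C_p (c/2)^{-p}\mu(H_N),
\]
so that $\mu(V_N)/\mu(H_N)\le C_p 2^p c^{-p}$ for all $N$. This contradicts $\mu(V_N)/\mu(H_N)\to\infty$. Hence $M_d$ is not of weak type $(p,p)$ for any $1\le p<\infty$.

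For the unboundedness statement, I would use Chebyshev's inequality: strong type $(p,p)$ implies weak type $(p,p)$, since
\[
\mu\{M_df>\lambda\}\le \lambda^{-p}\|M_df\|_p^p\le \lambda^{-p}\|M_d\|^p\|f\|_p^p.
\]
So boundedness on $L^p$ would contradict the first part, and $M_d$ is unbounded on $L^p(X,\mu)$ for every finite $p\ge1$.
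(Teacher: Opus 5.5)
Your proof is correct and is essentially the paper's argument: you test the weak-type inequality on $\mathbf 1_{H_N}$ at level $\lambda=c/2$ to get $\mu(V_N)\le C_p(2/c)^p\mu(H_N)$, which contradicts $\mu(V_N)/\mu(H_N)\to\infty$, and then you note that strong type implies weak type. Your measurability aside is extra care the paper omits; of the options you list, the clean justification is the reduction to rational radii, using right-continuity in $r$ of the closed-ball averages.
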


\begin{proof}
Fix $1\le p<\infty$ and set $\lambda=c/2$. Since
$
M_d{\mathbf 1}_{H_N}\ge c $ on $V_N$, 
we have
$
V_N\subset \{x:M_d{\mathbf 1}_{H_N}(x)>\lambda\}.
$
If the weak type $(p,p)$ estimate held with constant $C_p$, it would follow that
\[
\mu(V_N)
\le C_p\lambda^{-p}\int_X |{\mathbf 1}_{H_N}|^p\,d\mu
= C_p(2/c)^p\mu(H_N).
\]
But
\[
\frac{\mu(V_N)}{\mu(H_N)}\longrightarrow\infty,
\]
so this inequality fails for all sufficiently large $N$. Hence $M_d$ is not
of weak type $(p,p)$. 

Since boundedness on \(L^p(X,\mu)\) would imply weak type \((p,p)\), \(M_d\) is unbounded on \(L^p(X,\mu)\).
\end{proof}

We note that the same sets \(H_N,V_N\) and the same level \(c/2\) work for every finite \(p\).

\bigskip
Each of the three examples carries a natural height function, and the construction is organized by its level sets.
In the applications below, $H_N$ denotes the horizontal target layer
and $V_N$ the vertically extended set of centers. 

The set $H_N$ lies at height $N$, or between heights $N$ and $N+1$.
The set $V_N$ extends from height $0$ to height $N$,
whose horizontal size does not depend on the height.
Thus $\mu(V_N)/\mu(H_N)$ grows linearly with $N$.
For each $g\in V_N$, upward paths reach points of $H_N$
inside a ball $B_d(g,r)$ such that
\[
  \frac{\mu(H_N\cap B_d(g,r))}{\mu(B_d(g,r))}\ge c.
\]
Here $r$ depends on $N$ and the height of $g$,
but $c>0$ is independent of $N$ and $g$.

The pure exponential ball growth is important in the argument.
For the switch–walk–switch metric, a path of \(d\) upward steps passes through \(d+1\) lamp positions. Each lamp can be chosen independently, giving \(2^{d+1}\) possible final configurations. The whole ball has at most \(C2^d\) elements.
For the digit metric on \(BS(1,n)\), at each upward step we choose one of the \(n\) digits \(0,\ldots,n-1\). After \(d\) steps this gives \(n^d\) distinct elements, while the whole ball has at most \(C_n n^d\) elements.
In Sol, moving upward through height \(t\) reaches a terminal box of volume \(4e^t\), while the ball of radius \(t+3\) has volume at most \(Ce^{t+3}\).
Thus in each case, a fixed positive proportion of a suitable ball lies in \(H_N\).

\bigskip

Sections~\ref{sec:sws}, \ref{sec:bs}, and~\ref{sec:sol} prove Theorems A, B, and C, respectively. 
Section~\ref{sec:remarks} discusses consequences of the fixed-level construction and the role of the metric.

\section{The lamplighter group}\label{sec:sws}

\subsection{Switch--walk--switch word metric}\label{sec:group}

Let
$$
\mathcal A
 =\{\eta:\mathbb Z\to\mathbb Z/2\mathbb Z:
          \operatorname{supp}\eta\ \text{is finite}\},
$$
where
$
\operatorname{supp}\eta
 =\{m\in\mathbb Z:\eta(m)\ne0\}
$, 
with pointwise addition. For each $j\in\mathbb Z$, define
$\tau_j:\mathcal A\to\mathcal A$ by
\[
(\tau_j\eta)(m)=\eta(m-j)
\quad (\eta\in\mathcal A,\ m\in\mathbb Z).
\]
Each $\tau_j$ is an automorphism of $\mathcal A$, and
$\tau_j\tau_k=\tau_{j+k}$. Thus the semidirect product
$\mathcal L=\mathcal A\rtimes_\tau\mathbb Z$ has underlying set
$\mathcal A\times\mathbb Z$ and multiplication
$$
(\eta,j)(\xi,k)=(\eta+\tau_j\xi,j+k).
$$
The height homomorphism
$h_{\mathcal L}:\mathcal L\to\mathbb Z$ is given by
$h_{\mathcal L}(\eta,j)=j$.

For $i\in\Z$, let $\delta_i\in\mathcal A$ be the indicator of $\{i\}$,
with values in $\Ftwo$. With $a=(\delta_0,0)$ and $t=(0,1)$, one has
\[
 (\eta,j)a=(\eta+\delta_j,j),\qquad
 (\eta,j)t^{\pm1}=(\eta,j\pm1),\qquad
 t^iat^{-i}=(\delta_i,0).
\]

Every $\eta\in\mathcal A$ is a finite sum of the functions $\delta_i$,
so these identities imply that $a$ and $t$ generate $\LL$.
We call $\eta$ the {\it lamp configuration} and $j$ the {\it cursor position}.
We call the set of elements with cursor position \(j\) the {\it level} \(j\).

Thus right multiplication by \(a\) switches the lamp at the cursor, while right multiplication by \(t\) or \(t^{-1}\) moves the cursor one step to the right or left, respectively. 
We use the switch-walk-switch generating set
\[
S_{\mathrm{sws}}=\{t,\,at,\,ta,\,ata\}^{\pm1}.
\]
This set generates $\mathcal L$ since it contains $t$ and $at$, and
$a=(at)t^{-1}$.
Write $|g|_{\mathrm{sws}}$ for the  least number of these generators
needed to represent $g$, and put
\[
B_{\mathrm{sws}}(r)
   =\{g\in\mathcal L:|g|_{\mathrm{sws}}\le r\},
\quad
r\in\mathbb N_0.
\]

For the left-invariant distance $|x^{-1}y|_{\mathrm{sws}}$, the ball
centered at $x$ is $xB_{\mathrm{sws}}(r)$. Left multiplication is a bijection,
so every such ball has cardinality $|B_{\mathrm{sws}}(r)|$.
Consequently, for every finite set \(F\subset \mathcal L\),
\begin{equation}\label{eq:indicator-average-sws}
 M_{\mathrm{sws}}\one_F(x)
 =\sup_{r\in\Nzero}
   \frac{|xB_{\mathrm{sws}}(r)\cap F|}{|B_{\mathrm{sws}}(r)|}.
\end{equation}

Since the word metric is integer-valued, taking \(r\in\mathbb N_0\) gives the same family of balls as taking positive real radii.

\subsection{Ball growth}

\begin{lemma}[Ball growth bound]\label{lem:sws-growth}
For every integer $r\ge0$,
\begin{equation}\label{eq:sws-ball}
 |B_{\mathrm{sws}}(r)|\le32\,2^r.
\end{equation}
\end{lemma}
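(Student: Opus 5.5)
Each element of $S_{\mathrm{sws}}$ acts on $(\eta,j)$ by moving the cursor one step, left or right, while optionally toggling the lamp at the starting position and the lamp at the ending position. We encode a word of length $k$ as a walk $j_0=0,j_1,\dots,j_k$ on $\mathbb Z$ with $|j_{i+1}-j_i|=1$, together with toggle bits, and we bound the number of distinct resulting group elements.

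\textbf{Step 1: reduce to a walk plus free lamps on its range.} A word of length $k$ produces the element $(\eta,j_k)$, where $\operatorname{supp}\eta$ lies in the range $\{j_0,\dots,j_k\}$. This range is an interval $[-m,M]$ containing $0$ and $j_k$, with $m,M\ge0$ and $m+M\le k$. The number of elements of length at most $r$ is therefore at most
\[
\sum_{\substack{m,M\ge 0,\ j\in[-m,M]\\ \text{interval $[-m,M]$ reachable}}} 2^{m+M+1}.
\]
A crude count of $2^{\text{interval length}}$ lamp choices times the number of triples $(m,M,j)$ gives a polynomial factor. To remove it we need the sharper constraint relating $r$, $m$, $M$ and $j$.

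\textbf{Step 2: the length constraint.} A walk from $0$ covering $[-m,M]$ and ending at $j$ needs at least
\[
L(m,M,j)=m+M+\min(m+|j|,\ M+|M-j|)
\]
steps. Going left first costs $m+m+M+(M-j)$, or symmetrically; I will phrase this as ``$m+M$ plus the backtracking overhead.'' The overhead is at least $\min(m,M)+\operatorname{dist}(j,\text{end visited last})$. Hence, for elements of length at most $r$, the lamp count satisfies
\[
2^{m+M+1}\le 2\cdot 2^{\,r-\text{overhead}}.
\]

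\textbf{Step 3: sum a geometric series.} Parametrize by the side visited first (two choices), the overhead $u=\min(m,M)+|j-\text{last end}|$, and the total span $s=m+M\le r-u$. The number of triples with a given $u$ is at most about $2(u+1)^2$ (choices of the smaller side and of the offset of $j$), and the summand is $2^{s+1}\le 2^{r-u+1}$. Summing first over $s$ for fixed $(m,j)$ data gives at most $2\cdot 2^{r-u+1}$, since $\sum_{s\le r-u}2^{s}\le 2^{r-u+1}$. The total is then bounded by
\[
C\sum_{u\ge 0} (u+1)^2 2^{-u}\, 2^r .
\]
This is already $O(2^r)$, but the constant $32$ requires the bookkeeping to be tighter.

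\textbf{Main obstacle.} The hard part is pinning the constant exactly at $32$. I would obtain it by counting more carefully: fix the far endpoint on the first side (the smaller side $a=\min(m,M)$), then note that the final segment from the far end back to $j$ has length $b\ge0$. With $s$ the span, this gives a sum of the form
\[
\sum_{a,b\ge0} 2\cdot 2^{\,r-a-b+1}\cdot(\text{bounded factor}),
\]
whose double geometric series $\sum_{a,b}2^{-a-b}=4$ gives a bound like $2\cdot 2\cdot 4\cdot 2^r=16\cdot2^r$. This leaves a factor $2$ of slack for the lamp at the cursor endpoint and for double counting between the two orders, which is enough to reach $32\cdot2^r$.

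If this overhead bookkeeping proves messy, the fallback is a recursion on $r$. Elements of length $r$ whose walk has already reached a new extreme contribute a factor $2$ per step, and backtracking steps contribute a factor $1$. This yields $|B(r)|\le 2|B(r-1)|+(\text{lower order})$, which is then summed.
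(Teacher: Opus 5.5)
Your strategy is the paper's. Lamps are supported on the range interval of the cursor walk, a walk covering an interval of width $w$ and ending at $j$ has length at least $2w-|j|$, and geometric sums finish the count. Your Step 3 does give $|B_{\mathrm{sws}}(r)|\le C\,2^r$, which would suffice for Theorem~A, but it does not prove the lemma as stated. There are two concrete problems.

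First, the displayed formula $L(m,M,j)=m+M+\min(m+|j|,\,M+|M-j|)$ is wrong. For $m=0$, $M=j=5$ it gives $10$, although the monotone walk has length $5$, so imposing $L\le r$ would discard genuine elements of the ball. The correct minimum is $2(m+M)-|j|$. Your verbal version (length of the side reached first plus the distance from $j$ to the end reached last) is the right lower bound.

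Second, the constant $32$ is never derived. In your final count the ``bounded factor'' is left unspecified. With $a=\min(m,M)$, a triple $(m,M,j)$ is determined by $(a,b,s)$ only after choosing which side is the smaller one and which end is reached last. The natural count is then $4\sum_{a,b\ge0}\sum_{s\le r-a-b}2^{s+1}\le 64\cdot 2^r$, not $32\cdot 2^r$. There is also no ``slack for the lamp at the cursor endpoint'': the $+1$ in $2^{s+1}$ already counts every site of the interval, including the cursor's. The fallback recursion is too vague to produce any explicit constant.

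The fix is to use the exact overhead, as the paper does. Put $s=m+M$ and $u=s-|j|\ge 0$; the length constraint becomes $s+u\le r$. For fixed $s,u$ and a fixed sign of $j$, the value $|j|=s-u$ is determined. The side of the interval opposite to $j$ has length in $\{0,\dots,u\}$, so there are at most $u+1$ intervals, each carrying at most $2^{s+1}$ configurations. Hence
\[
|B_{\mathrm{sws}}(r)|\le\sum_{u\ge0}2(u+1)\sum_{s=0}^{r-u}2^{s+1}
\le\sum_{u\ge0}2(u+1)\,2^{r-u+2}=32\cdot 2^r .
\]
Equivalently, in your $(a,b)$ parametrization, take $a$ to be the length of the side reached first rather than the smaller side. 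Then the only extra choice is which side that is, and $2\sum_{a,b\ge0}2^{r-a-b+2}=32\cdot 2^r$, with no room to spare. The paper writes the same count as $\sum_{j,u}(u+1)2^{j+u+1}$ over $j\ge0$, $j+2u\le r$. This is at most $16\cdot 2^r$, and it doubles the bound for $j<0$.
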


\begin{proof}

Let $g=(\eta,j)\in B_{\mathrm{sws}}(r)$. We first consider the case 
that $j \ge 0$. Choose a word of length
at most $r$ representing $g$. Its cursor follows a nearest-neighbor
path from $0$ to $j$. Let $[L,R]$ be the smallest interval containing
this path, and set $w=R-L$. Then, $0,j\in[L,R]$ and $w\ge j$.

Every switch takes place at a visited site, so \(\operatorname{supp}\eta\subseteq[L,R]\). Since the path visits both endpoints,
\begin{equation}\label{eq:sws-interval-cost}
 2w-j\le r.
\end{equation}

Put \(u=w-j\), so that \(w=j+u\). Then \(u\ge 0\), and \eqref{eq:sws-interval-cost} gives
\(j+2u\le r\), so \(0\le u\le (r-j)/2\).

For fixed \(j\) and \(u\), there are \(u+1\) possible intervals of width
\(w=j+u\) containing both \(0\) and \(j\) since their left endpoints range from
\(-u\) to \(0\). Each such interval supports at most \(2^{j+u+1}\) lamp
configurations. This may count an element more than once, which is harmless
for an upper bound. 

Hence, summing over all \(j\) and \(u\) with
\(0\le j\le r\) and \(0\le u\le (r-j)/2\), we obtain
\begin{align*}
 \bigl|\{(\eta,j)\in B_{\mathrm{sws}}(r):j\ge 0\}\bigr|
 &\le\sum_{j=0}^r\sum_{0\le u\le(r-j)/2}(u+1)2^{j+u+1}\\
 &=2\,2^r\sum_{s=0}^r\sum_{0\le u\le s/2}(u+1)2^{-s+u}\\
 &\le2\,2^r\sum_{u=0}^{\infty}(u+1)2^u
                      \sum_{s=2u}^{\infty}2^{-s}\\
 &=4\,2^r\sum_{u=0}^{\infty}(u+1)2^{-u}=16\,2^r.
\end{align*}
In the second line we set $s=r-j$. 
Since all summands are nonnegative, we extend the sum over \(s\)
from \(0 \le s\le r\) to \( 0 \le s<\infty\), then interchange the order of summation.
 In the last
line we used $\sum_{s=2u}^{\infty}2^{-s}=2^{1-2u}$ and
$\sum_{u\ge0}(u+1)2^{-u}=4$.

By reflection, the same bound holds for \(j<0\). Hence \(|B_{\mathrm{sws}}(r)|\le 32\,2^r\).
\end{proof}

\subsection{The sets \(H_N\) and $V_N$}
For an integer $N\ge1$, define
$$
\mathcal K_N
 =\{\eta\in\mathcal A:
\operatorname{supp}\eta\subseteq\{0,1,\ldots,N\}\}.
$$
We have $|\mathcal K_N|=2^{N+1}$.
Define
$$
 H_N=\{(\eta,N):\eta\in\mathcal K_N\},\quad
 V_N=\{(\eta,j):\eta\in\mathcal K_N,\ 0\le j\le N\}.
$$
Thus $V_N$ consists of $N+1$ cursor levels, each containing the same
$2^{N+1}$ configurations, and $H_N$ is its top level. We have 
\begin{equation}\label{eq:EN-XN-sizes}
|H_N|=2^{N+1},
\quad
|V_N|=(N+1)|H_N|.
\end{equation}

\begin{proposition}\label{prop:sws}
For every $N\ge1$ and every $x\in V_N$,
\[
 M_{\mathrm{sws}}\one_{H_N}(x)\ge\frac1{16}.
\]
Hence,
\begin{equation}\label{eq:sws-halo}
 \bigl|\{x:M_{\mathrm{sws}}\one_{H_N}(x)>1/32\}\bigr|
 \ge(N+1)|H_N|.
\end{equation}
\end{proposition}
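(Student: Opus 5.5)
Fix $x=(\xi,j)\in V_N$ with $\xi\in\mathcal K_N$ and $0\le j\le N$. I will use \eqref{eq:indicator-average-sws} with the radius $d=N-j$, so that $M_{\mathrm{sws}}\one_{H_N}(x)\ge |xB_{\mathrm{sws}}(d)\cap H_N|/|B_{\mathrm{sws}}(d)|$. The denominator is at most $32\cdot 2^{d}$ by Lemma~\ref{lem:sws-growth}. The task is therefore to exhibit at least $2^{d+1}$ distinct elements of $H_N$ lying in $xB_{\mathrm{sws}}(d)$; this gives the ratio $2^{d+1}/(32\cdot2^d)=1/16$.

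To produce these elements, I consider words $s_1s_2\cdots s_d$ with each $s_i\in\{t,at,ta,ata\}$. Right multiplication of $(\eta,k)$ by such a generator moves the cursor from $k$ to $k+1$. Along the way it optionally toggles the lamp at $k$ (the letter $a$ before $t$) and optionally toggles the lamp at $k+1$ (the letter $a$ after $t$). Starting from $x$, the product $x s_1\cdots s_d$ therefore has cursor position $j+d=N$. Its lamp configuration is $\xi+\sum_{m=j}^{N}\epsilon_m\delta_m$, where each $\epsilon_m\in\Ftwo$ is a sum of contributions from at most two adjacent generators.

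The key point, and the only step needing care, is that every pattern $(\epsilon_j,\dots,\epsilon_N)\in(\Ftwo)^{d+1}$ is attained. This is easy to arrange. Use the pre-switch of $s_1$ to set $\epsilon_j$. For $1\le i\le d$, use the post-switch of $s_i$ to set $\epsilon_{j+i}$, and never use the pre-switches of $s_2,\dots,s_d$. So the generators are $at$ or $t$ for $s_1$ (adding $ta$ or $ata$ as needed), and $t$ or $ta$ for the others.

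Distinct patterns give distinct configurations. Since $\xi$ is supported in $\{0,\dots,N\}$ and $j\ge0$, each resulting configuration still lies in $\mathcal K_N$. Hence there are $2^{d+1}$ distinct points of $H_N$, each at distance at most $d$ from $x$.

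This gives $M_{\mathrm{sws}}\one_{H_N}(x)\ge 1/16>1/32$ for every $x\in V_N$. Since $V_N\subseteq\{M_{\mathrm{sws}}\one_{H_N}>1/32\}$, the bound \eqref{eq:sws-halo} follows from $|V_N|=(N+1)|H_N|$ in \eqref{eq:EN-XN-sizes}. The edge case $j=N$, $d=0$ is covered by the same count: the ball of radius $0$ is $\{x\}\subseteq H_N$, so the ratio is $1\ge 1/16$.
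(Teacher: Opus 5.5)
Your proof is correct and follows the paper's argument: it uses the same radius $d=N-j$ and the same ball bound from Lemma~\ref{lem:sws-growth}, together with an explicit family of $2^{d+1}$ upward words of length $d$ in $S_{\mathrm{sws}}$ that realize every lamp pattern on $\{j,\dots,N\}$. Your words use one initial pre-switch followed by post-switches, while the paper's use pre-switches followed by one final post-switch, which makes no difference. One small point: the case $d=0$ is not literally ``covered by the same count,'' since the construction needs at least one letter, but your separate radius-zero check handles it exactly as the paper does.
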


\begin{proof}
Fix $x=(\eta,j)\in V_N$ and put $d=N-j$. 
If \(d=0\), then \(x\in H_N\), and the radius-zero average is \(1\).
Hence suppose \(d\ge1\).

For each subset \(E\subseteq\{0,1,\ldots,d\}\), let
\(\chi_E\in A\) be its indicator function, viewed as a
\(\mathbb Z/2\mathbb Z\)-valued function, and put
$
h_E=(\chi_E,d).
$
There are \(2^{d+1}\) such subsets \(E\), and the corresponding elements
\(h_E\) are distinct. Hence
\[
\bigl|\{h_E:E\subseteq\{0,1,\ldots,d\}\}\bigr|=2^{d+1}.
\]

Each \(h_E\) is obtained by a monotone walk from \(0\) to \(d\), represented by a word of length \(d\) in \(S_{\rm sws}\): switch at a site if needed before leaving it, and switch at \(d\) if needed after reaching it.

 More explicitly, if
\(\varepsilon_i=\chi_E(i)\), then the word
\[
(a^{\varepsilon_0}t)(a^{\varepsilon_1}t)\cdots
(a^{\varepsilon_{d-2}}t)(a^{\varepsilon_{d-1}}ta^{\varepsilon_d})
\]
has \(d\) elements of \(S_{\mathrm{sws}}\) in the product and represents \(h_E\).
When \(d=1\), it consists only of the last element. Therefore
\(h_E\in B_{\mathrm{sws}}(d)\).
Then
\[
xh_E=(\eta+\tau_j\chi_E,N)\in H_N,
\]
since $\operatorname{supp}(\tau_j\chi_E)\subseteq\{j,\ldots,N\}$.
Thus
\[
\{xh_E:E\subseteq\{0,\ldots,d\}\}
\subseteq (xB_{\mathrm{sws}}(d)\cap H_N).
\]

Since left multiplication by \(x\) is injective,
\[
\begin{aligned}
|xB_{\mathrm{sws}}(d)\cap H_N|
&\ge \bigl|\{xh_E:E\subseteq\{0,\ldots,d\}\}\bigr|\\
&= \bigl|\{h_E:E\subseteq\{0,\ldots,d\}\}\bigr|
=2^{d+1}.
\end{aligned}
\]
Taking $r=d$ in \eqref{eq:indicator-average-sws}, 
Lemma~\ref{lem:sws-growth} gives 
\[
M_{\mathrm{sws}}\mathbf 1_{H_N}(x)
 \ge \frac{2^{d+1}}{|B_{\mathrm{sws}}(d)|}
 \ge \frac1{16}.
\]
Since \(x\in V_N\) was arbitrary,
\[
V_N\subseteq \{x:M_{\mathrm{sws}}\mathbf {\mathbf 1}_{H_N}(x)>1/32\}.
\]
Together with \eqref{eq:EN-XN-sizes}, this gives \eqref{eq:sws-halo}.
\end{proof}

\begin{proof}[Proof of Theorem~\ref{thm:lamplighter}]
By Proposition~\ref{prop:sws},
\[
M_{\mathrm{sws}}\mathbf {\mathbf 1}_{H_N}\ge \frac1{16}
\quad\text{on }V_N,
\]
while \eqref{eq:EN-XN-sizes} gives
\[
\frac{|V_N|}{|H_N|}=N+1\longrightarrow\infty.
\]
Thus Lemma~\ref{lem:fixed-level} applies with \(c=1/16\), and proves the theorem.
\end{proof}

\section{The Baumslag--Solitar groups $\BS(1,n)$}\label{sec:bs}

For  $n\ge2$, write
\[
 G_n=\BS(1,n)=\langle a,t\mid tat^{-1}=a^n\rangle.
\]
We use a generating set adapted to the base-$n$ structure of $G_n$.
This gives a transparent proof, parallel to the
switch--walk--switch argument for the lamplighter group.

\subsection{The digit word metric}

Let 
$\mathbb Z[1/n]\rtimes\mathbb Z
$
be the semidirect product, 
where \(j\in\mathbb Z\) acts on \(\mathbb Z[1/n]\) by multiplication by
\(n^j\). Thus
\begin{equation}\label{eq:bs-digit-law}
(x,j)(y,k)=(x+n^j y,j+k).
\end{equation}
The map
\[
a\longmapsto(1,0),\quad t\longmapsto(0,1)
\]
defines an isomorphism \(G_n\to \mathbb Z[1/n]\rtimes\mathbb Z\).
Indeed, the relation \(tat^{-1}=a^n\) is satisfied, and the map is clearly surjective. 
Injectivity follows from Britton's lemma.
We use this identification and
write elements of \(G_n\) as \((x,j)\).
We call $j$ the {\it height} of $(x,j)$.

Define a symmetric generating set of $G_n$:
$$
 D_n=\{a^\varepsilon t:0\le\varepsilon<n\}^{\pm1}.
$$
 It generates $G_n$
because it contains $t$ and $at$, and $a=(at)t^{-1}$.

For \(0\le \varepsilon<n\), right multiplication by generators gives
\begin{equation}
(x,j)(a^\varepsilon t)=(x+\varepsilon n^j,j+1),
\quad
(x,j)(a^\varepsilon t)^{-1}=(x-\varepsilon n^{j-1},j-1).
\label{eq:bs-digit-steps}
\end{equation}
Thus every generator changes the height by one in absolute value. 
In particular, a word of \(r\) elements chosen from
\(\{a^\varepsilon t:0\le \varepsilon<n\}\) ends at height \(r\)
and is geodesic with respect to $D_n$.
 
 We call \(D_n\) the {\it  digit generating set} and its word metric the {\it digit word metric}.
For the generators \(a^\varepsilon t\), right multiplication adds \(\varepsilon n^j\) to the first coordinate in \eqref{eq:bs-digit-steps}, where \(\varepsilon\in\{0,\ldots,n-1\}\). Thus \(\varepsilon\) plays the role of a base-\(n\) digit. This explains the terminology.

Write $|g|_{D_n}$ for the word length and  define
\[
 B_{D_n}(r)=\{g\in G_n:|g|_{D_n}\le r\},\quad r\in\Nzero.
\]
Let $M_{D_n}$ be the centered maximal operator for this word metric
and counting measure. The ball centered at $g$ is $gB_{D_n}(r)$, and left
multiplication preserves cardinality. Hence, for every finite set
$F\subset G_n$,
\begin{equation}\label{eq:bs-digit-average}
 M_{D_n}\one_F(g)
 =\sup_{r\in\Nzero}\frac{|gB_{D_n}(r)\cap F|}{|B_{D_n}(r)|}.
\end{equation}

\subsection{Ball growth}

\begin{lemma}[Ball growth bound]\label{lem:bs-digit-growth}
There is a constant $C_n>0$ such that, for every integer $r\ge0$,
\begin{equation}\label{eq:bs-digit-ball}
|B_{D_n}(r)|\le C_n n^r.
\end{equation}
\end{lemma}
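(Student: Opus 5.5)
Every generator in $D_n$ changes the height by exactly $\pm1$ (see \eqref{eq:bs-digit-steps}). I will therefore encode a word of length at most $r$ by its height profile together with its digit choices, and count the resulting elements, in direct parallel with the proof of Lemma~\ref{lem:sws-growth}.

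\emph{Step 1: normal form of a word.} Fix a word $s_1\cdots s_k$ with $k\le r$. Its height path $0=h_0,h_1,\dots,h_k=j$ is a nearest-neighbour path. Let $L=\min_i h_i\le 0$ and $R=\max_i h_i\ge \max(0,j)$. Reading the word from the left and using \eqref{eq:bs-digit-steps}, the first coordinate is
\[
x=\sum_{\text{up steps } i}\varepsilon_i n^{h_{i-1}}-\sum_{\text{down steps } i}\varepsilon_i n^{h_{i}},
\]
with each digit $\varepsilon_i\in\{0,\dots,n-1\}$ attached to a height in $[L,R-1]$. Hence $x\in n^{L}\mathbb Z$ and $|x|\le \sum_{m=L}^{R-1} c_m (n-1)n^m$, where $c_m$ is the number of times the path crosses the edge between heights $m$ and $m+1$.

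\emph{Step 2: counting elements for a fixed frame.} The element $(x,j)$ is determined by $x\in n^L\mathbb Z$. This is where the key inequality enters: the path visits both $L$ and $R$, so, as in \eqref{eq:sws-interval-cost}, its length satisfies $k\ge (R-L)+(R-j)+(0-L)$, roughly twice the width minus $|j|$.

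The naive bound on the number of admissible $x$ is $|x|/n^L\lesssim \sum_m c_m n^{m-L+1}$. This is dominated by crossings near the top level $R$, and each such crossing costs about $n^{R-L}$. The crucial observation is that edges crossed many times force extra length. The top edge $[R-1,R]$ is crossed at least twice when $R>j$, and an edge crossed $c$ times costs $c$ steps. Equivalently, the number of possible $x$ is at most about
\[
n^{R-L}\sum_m c_m n^{m-R}\lesssim n^{R-L}\cdot(\text{bounded factor}),
\]
provided the total crossing count in the top $O(1)$ levels is bounded in terms of the slack $k-(2(R-L)-|j|)$.

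\emph{Step 3: summation.} With the width $w=R-L$ and the slack $u$ as in the lamplighter proof, the count becomes at most $C(u+1)\,n^{w+1}\cdot(\text{poly}(u))$, subject to $2w-|j|+u\le r$. The main term $n^{w}$ with $w\le (r+|j|)/2$ and $|j|\le r$ has its largest exponent at $w=r$, $|j|=r$, giving $n^r$. Terms with $w=r-s$ are suppressed by $n^{-s}$ and terms with slack by $n^{-u/2}$, so the double geometric sum converges to $C_n n^r$.

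The \textbf{main obstacle} is Step 2: turning extra crossings, i.e.\ back-and-forth oscillation, into an \emph{exponential} penalty rather than merely a polynomial one. My fallback is a cleaner route. Show directly that every $g$ with $|g|_{D_n}\le r$ equals $(x,j)$ with $x=n^L m$ and $|m|\le n^{R-L+1}$. Then use the fact that each element of height $j$ with frame $[L,R]$ is reached by a canonical geodesic of the form: down to $L$, up to $R$ writing base-$n$ digits (allowing signed digits, handled by carries), then down to $j$. This gives $|B_{D_n}(r)|\le\sum_{L,R,j}\,2n^{R-L+1}$ over triples satisfying $(R-L)+(R-j)+(-L)\le r$. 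The exponent $R-L$ is then at most $r-(R-j)-(-L)$, and the sum over $(R-j,-L)$ is geometric, giving $C_n n^r$ with $C_n$ of order $n/(1-n^{-1})^2$ times a constant.
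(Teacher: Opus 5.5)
\textbf{Step 2 is left open, and it aims at the wrong target.} Your Steps 1 and 3 have the same architecture as the paper's proof (frame of a word, crossing counts, geometric summation over the width). But Step 2, which carries the whole argument, is never proved, and the obstacle you identify is not the real one.

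The factor in front of $n^{R-L}$ is \emph{not} bounded. The word $t^h\bigl[(a^{n-1}t)t^{-1}\bigr]^k t^{-h}$ has frame $[0,h+1]$ and first coordinate $k(n-1)n^h$. So $|x|/n^L$, and the number of reachable $x$, genuinely grow linearly with the slack.

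Nor is an exponential penalty for oscillation needed. The missing step is a one-line linear estimate. For $j\ge0$, the edge $[m,m+1]$ must be crossed at least $c_m^{\min}$ times, where $c_m^{\min}=2$ for $m\in[L,-1]\cup[j,R-1]$ and $c_m^{\min}=1$ for $m\in[0,j-1]$. Then $\sum_m c_m^{\min}=2w-j$, the slack is $u=\sum_m(c_m-c_m^{\min})$, and
\[
\frac{|x|}{n^L}\le (n-1)\sum_{m=L}^{R-1}c_m n^{m-L}\le 2(n^w-1)+u(n-1)n^{w-1}\le (u+2)n^w .
\]
The exponential decay then comes from the length constraint $w+(w-j)+u\le r$ alone. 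There are at most $3(u+2)n^w$ values of $x$ and $w-j+1$ choices of $L$. Summing over $u\ge0$, $v=w-j\ge0$ and $w\le r-u-v$ gives at most $\frac{3n}{n-1}\,n^r\sum_{u,v}(u+2)(v+1)n^{-u-v}=O(n^r)$. This is exactly the paper's argument: it bounds the number of extra crossings by $r-w$, gets $|x|/n^m\le (r-w+1)n^w$, and lets $n^{w-r}$ absorb the polynomial factor $(r-w+1)^3$.

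\textbf{The fallback does not close the gap.} The bound $|m|\le n^{R-L+1}$ is false if $[L,R]$ is the frame of an arbitrary word of length at most $r$; the word above violates it once $k(n-1)>n^2$. For frames of geodesics it may hold. However, that claim, and the existence of canonical signed-digit geodesics with carries, would need an exchange argument showing that heavy oscillation can be replaced by a shorter path with a slightly larger frame. You do not give one.

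\textbf{The case $j<0$ is also mishandled.} The expression $(R-L)+(R-j)+(-L)$ is the minimal length only when $j\ge0$, both in your Step 2 and in the fallback. For $j<0$ the minimal length is $2(R-L)+j$, so your fallback constraint as written omits elements such as $t^{-r}$. Handle $j<0$ by inversion, as the paper does.
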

Height-monotone paths of length at most \(r\) produce \(O(n^r)\) points. The point of the lemma is that paths with backtracking change this only by a constant factor. It is easy to get an \(O(rn^r)\) bound, but that would not be sufficient for our application.

\begin{proof}
Fix $r \ge 0$. 
We first count elements $g=(x,j)\in B_{D_n}(r)$ with $j\ge0$.
For such $g$, choose a shortest word  representing $g$.
Its length is at most $r$. This word gives a path in the second coordinate  that starts at \(0\) and ends at \(j\), changing by one at each step.

Let \(m\) and \(M\) be the minimum and maximum heights (the second coordinates) visited. We define the width by 
\[
w=M-m.
\]
Then \(m\le 0\le j\le M\) and \(0\le w\le r\). If \(w=0\), then
\(g=e\), contributing one element.

Now fix \(w\ge 1\).
Since the path visits both \(m\) and \(M\), its length is at least 
$
2w-j$. Hence $2w-j \le r.
$
Thus $2w-r\le j\le w.$
Also, combined with \(M=m+w\ge j\), we have 
$
w-r\le m\le 0.$

Those bounds on $j$ and $m$ give at most \(r-w+1\) possible values
of \(j\) and at most \(r-w+1\) possible values of \(m\). Hence there
are at most
\begin{equation}\label{eq:bs-range}
    (r-w+1)^2
\end{equation}
possible pairs \((m,j)\)
for the fixed $r$ and $w$.

\bigskip
For each such pair \((m,j)\), we now count the possible first coordinates
\(x\). By \eqref{eq:bs-digit-steps}, a crossing from height \(h\) to
\(h+1\) contributes \(\varepsilon n^h\) to \(x\), and a crossing from
\(h+1\) to \(h\) contributes \(-\varepsilon n^h\), where
\(0\le\varepsilon<n\).

The path crosses between heights \(h\) and \(h+1\) at least once for
each \(m\le h<M\). Choose one such crossing for each \(h\). There are
\(w\) such crossings, so there are at most \(r-w\) further crossings.

Since all crossings occur between heights \(m\) and \(M\), we have
\(x\in n^m\mathbb Z\). After division by \(n^m\), the chosen crossings
contribute at most
\[
(n-1)\sum_{i=0}^{w-1}n^i=n^w-1.
\]
Each remaining  crossing contributes at most
\((n-1)n^{w-1}\) since 
$0 \le \varepsilon \le n-1$. 
Hence the total contribution of the remaining  ones is at most
$(r-w)(n-1)n^{w-1}$.
Consequently, we have 
$$
\frac{|x|}{n^m}
\le
n^w-1+(r-w)(n-1)n^{w-1} \le (r-w+1)n^w.
$$
Since \(x/n^m\) is an integer, the number of possible values of \(x\)
is at most
$
2(r-w+1)n^w.
$

Combined with \eqref{eq:bs-range}, for the fixed \(w\ge1\),  there are at most
$
2(r-w+1)^3n^w
$
possible elements $(x,j)$ with \(j\ge0\).

\bigskip
Finally, summing over $w$, and adding the identity element, we obtain
\[
\bigl|\{(x,j)\in B_{D_n}(r):j\ge0\}\bigr|
\le
2\sum_{w=0}^r (r-w+1)^3n^w.
\]
Setting \(k=r-w\), we have 
\[
2\sum_{w=0}^r (r-w+1)^3n^w
=
2n^r\sum_{k=0}^r (k+1)^3n^{-k}
\le
2n^r\sum_{k=0}^{\infty}(k+1)^3n^{-k}.
\]

For \(j<0\), replace \(g\) by \(g^{-1}\). Inversion preserves word length and changes the sign of the height. 
We have the same bound as $j\ge0$.

Thus \eqref{eq:bs-digit-ball} holds with
\[
C_n
=
4\sum_{k=0}^{\infty}(k+1)^3n^{-k}
<\infty.
\]
\end{proof}

\subsection{The sets $H_N$ and $V_N$}

For an integer $N\ge1$, define
$$
\begin{aligned}
 H_N&=\{(x,N):x\in\Z,\ 0\le x<2n^N\},\\
 V_N&=\{(x,j):x,j\in\Z,\ 0\le x<n^N,\ 0\le j\le N\}.
\end{aligned}
$$
The set $H_N$ lies at height $N$. The set $V_N$ has $N+1$ levels,
each with $n^N$ elements. Therefore
\begin{equation}\label{eq:bs-digit-sizes}
 |H_N|=2n^N,\quad |V_N|=(N+1)n^N,
 \quad \frac{|V_N|}{|H_N|}=\frac{N+1}{2}.
\end{equation}

\begin{proposition}\label{prop:bs-digit}
For every $N\ge1$ and every $g\in V_N$,
\[
                         M_{D_n}\one_{H_N}(g)\ge\frac1{C_n}.
\]
Hence,
\begin{equation}\label{eq:bs-digit-halo}
 \bigl|\{g\in G_n:M_{D_n}\one_{H_N}(g)>1/(2C_n)\}\bigr|
                   \ge\frac{N+1}{2}|H_N|.
\end{equation}
\end{proposition}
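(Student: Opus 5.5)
Fix $g=(x,j)\in V_N$, so $0\le x<n^N$ and $0\le j\le N$, and put $d=N-j$. The plan is to follow the proof of Proposition~\ref{prop:sws}: exhibit $n^d$ distinct points of $H_N$ inside the ball $gB_{D_n}(d)$, and then compare with Lemma~\ref{lem:bs-digit-growth}.

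If $d=0$, then $g\in H_N$ because $0\le x<n^N<2n^N$. The radius-zero average is then $1\ge 1/C_n$; note that $C_n\ge 4$ by its definition. So we may assume $d\ge1$.

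For each digit string $(\varepsilon_0,\ldots,\varepsilon_{d-1})\in\{0,\ldots,n-1\}^d$, consider the word $h_\varepsilon=(a^{\varepsilon_0}t)(a^{\varepsilon_1}t)\cdots(a^{\varepsilon_{d-1}}t)$. It has length $d$, so $h_\varepsilon\in B_{D_n}(d)$. Iterating \eqref{eq:bs-digit-steps} from $g$ gives
\[
gh_\varepsilon=\Bigl(x+\sum_{i=0}^{d-1}\varepsilon_i n^{j+i},\,N\Bigr).
\]
Base-$n$ uniqueness shows that $k=\sum_i\varepsilon_i n^i$ ranges bijectively over $\{0,\ldots,n^d-1\}$. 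Hence the first coordinates $x+n^jk$ are $n^d$ distinct integers. Alternatively, one can use injectivity of left multiplication together with distinctness of the $h_\varepsilon$.

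These integers all lie in $[0,2n^N)$. Indeed, $x\ge0$, and
\[
x+n^jk\le (n^N-1)+n^j(n^d-1)<n^N+n^N=2n^N.
\]
This is exactly why $H_N$ was chosen with horizontal width $2n^N$ rather than $n^N$. Therefore $|gB_{D_n}(d)\cap H_N|\ge n^d$.

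By \eqref{eq:bs-digit-average} and Lemma~\ref{lem:bs-digit-growth},
\[
M_{D_n}\one_{H_N}(g)\ge \frac{n^d}{|B_{D_n}(d)|}\ge \frac{n^d}{C_n n^d}=\frac1{C_n}.
\]
Consequently $V_N\subseteq\{M_{D_n}\one_{H_N}>1/(2C_n)\}$. Combining this with \eqref{eq:bs-digit-sizes} gives \eqref{eq:bs-digit-halo}.

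No step here is a real obstacle, since the hard part, the sharp $O(n^r)$ ball bound, is already supplied by Lemma~\ref{lem:bs-digit-growth}. The only points needing care are the containment in $H_N$, which is the width-$2n^N$ estimate above, and the distinctness of the $n^d$ endpoints, which follows from base-$n$ expansion.
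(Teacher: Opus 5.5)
Your proposal is correct and follows essentially the same argument as the paper: the $n^{d}$ digit words $(a^{\varepsilon_0}t)\cdots(a^{\varepsilon_{d-1}}t)$ of length $d$, the containment $x+n^jk<2n^N$ in $H_N$, and the comparison with Lemma~\ref{lem:bs-digit-growth}. The only cosmetic difference is that the paper collects these words into the set $Q_r$ and covers $r=0$ as the empty product, whereas you treat $d=0$ separately.
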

\begin{proof}
Fix $g=(x,j)\in V_N$ and put $r=N-j$.
For each choice

$\varepsilon_0,\ldots,\varepsilon_{r-1}\in\{0,1,\ldots,n-1\}$,
the group law \eqref{eq:bs-digit-law} gives
\begin{equation}\label{eq:bs-digit-words}
 (a^{\varepsilon_0}t)\cdots(a^{\varepsilon_{r-1}}t)
      =\left(\sum_{i=0}^{r-1}\varepsilon_i n^i,r\right).
\end{equation}
As \(\varepsilon_0,\ldots,\varepsilon_{r-1}\) vary, the sum in \eqref{eq:bs-digit-words} runs through each integer \(k\) with \(0\le k<n^r\) exactly once.
 Each product uses $r$ elements of $D_n$. Hence the set
\[
Q_r=\{(k,r):k\in\Z,\ 0\le k<n^r\}
\]
satisfies
\begin{equation}\label{eq:bs-digit-Qr}
|Q_r|=n^r,\quad  Q_r\subset B_{D_n}(r).
\end{equation}
For $r=0$, the product is empty and $Q_0=\{(0,0)\}$, so these
statements remain valid.

For every $(k,r)\in Q_r$, left multiplication by $g=(x,j)$ gives
\[
g(k,r)=(x+n^j k,N).
\]
The first coordinate is an integer, because $x,k\in\Z$ and $j\ge0$,
and it satisfies
\[
0\le x+n^j k<n^N+n^j n^r=2n^N.
\]
Thus $gQ_r\subseteq H_N$. Together with \eqref{eq:bs-digit-Qr}, this
proves
\[
gQ_r\subset (gB_{D_n}(r)\cap H_N).
\]

Left multiplication by $g$ is injective, so
\[
|gB_{D_n}(r)\cap H_N| \ge|gQ_r|=|Q_r|=n^r.
\]
Taking this radius in \eqref{eq:bs-digit-average},
Lemma~\ref{lem:bs-digit-growth} gives

\[
 M_{D_n}\one_{H_N}(g)
 \ge\frac{|gB_{D_n}(r)\cap H_N|}{|B_{D_n}(r)|}
 \ge\frac{n^r}{C_n n^r}=\frac1{C_n}.
\]
Since \(g\in V_N\) was arbitrary,
\[
 V_N\subseteq\{g\in G_n:M_{D_n}\one_{H_N}(g)>1/(2C_n)\}.
\]
Together with \eqref{eq:bs-digit-sizes}, this gives
\eqref{eq:bs-digit-halo}.
\end{proof}

\begin{proof}[Proof of Theorem~\ref{thm:baumslag}]

By Proposition~\ref{prop:bs-digit},
\[
M_{D_n}\one_{H_N}\ge\frac1{C_n} \quad\text{on }V_N,
\]
while \eqref{eq:bs-digit-sizes} gives
\[
\frac{|V_N|}{|H_N|}  
=\frac{N+1}{2}\longrightarrow\infty.
\]
Thus Lemma~\ref{lem:fixed-level} applies with \(c=1/C_n\), and proves the theorem.
\end{proof}

\section{The Sol group}\label{sec:sol}

\subsection{Geometry of Sol}
Let $\Sol$ be the Lie group $\R^3$ with multiplication \eqref{eq:sol-law}. Its
identity and inverse are
\[
 o=(0,0,0),\qquad (x,y,z)^{-1}=(-e^{-z}x,-e^zy,-z).
\]
The map $h_{\Sol}:\Sol\to\R$, $h_{\Sol}(x,y,z)=z$, is a group
homomorphism, called the {\it height}. Its kernel is the horizontal subgroup
$\R^2\times\{0\}$. Conjugation by $(0,0,z)$
acts on the horizontal subgroup as
\[
 (0,0,z)(a,b,0)(0,0,-z)=(e^za,e^{-z}b,0).
\]

For the metric \eqref{eq:sol-metric}, the length of a piecewise smooth
curve $\gamma(s)=(x(s),y(s),z(s))$ is
\begin{equation}\label{eq:sol-length}
 L(\gamma)=\int
 \sqrt{e^{-2z(s)}x'(s)^2+e^{2z(s)}y'(s)^2+z'(s)^2}\,ds.
\end{equation}
In particular,
a vertical segment has length equal to its change in height. At fixed
height $z$, a segment changing only $x$ by $a$ has length $e^{-z}|a|$,
and one changing only $y$ by $b$ has length $e^z|b|$.

Left multiplication by $q=(x_0,y_0,z_0)$ is the map
\begin{equation}\label{eq:sol-left-translation}
 L_q(a,b,c)=(x_0+e^{z_0}a,\ y_0+e^{-z_0}b,\ z_0+c).
\end{equation}
Pulling back \eqref{eq:sol-metric} under this map gives
\[
 e^{-2(z_0+c)}e^{2z_0}\,da^2
 +e^{2(z_0+c)}e^{-2z_0}\,db^2+dc^2
 =e^{-2c}\,da^2+e^{2c}\,db^2+dc^2.
\]
Hence $L_q$ is an isometry, and $B_d(q,r)=qB_d(o,r)$.

The metric tensor has determinant $e^{-2z}e^{2z}=1$, so its volume
element is $d\mu=dx\,dy\,dz$.
Since left multiplication is an isometry, it preserves Riemannian
volume. Hence
\begin{equation}
\mu(B_d(q,r))=\mu(B_d(o,r)).
\label{eq:sol-homogeneity}
\end{equation}

\subsection{Sphere and ball growth}\label{sec:sol-growth}
We use the following theorem of Schwartz on the area growth of metric spheres in \(\mathrm{Sol}\).
\begin{theorem}[Sphere growth bound {\cite[Theorem 1.1]{Schwartz}}]\label{th-sch}
For the metric \eqref{eq:sol-metric}, there exists \(r_0>0\) such that
the Riemannian area \(A(r)\) of the metric sphere of radius \(r\) satisfies
\[
A(r)\le 20\pi e^r
\qquad (r\ge r_0).
\]
\end{theorem}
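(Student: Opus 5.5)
Since the metric \eqref{eq:sol-metric} is left-invariant, I would compute $A(r)$ for the sphere $S(o,r)$ as the area of the image of the time-$r$ exponential map. The domain is restricted to those unit directions whose geodesic is still minimizing at time $r$. The plan has three steps: integrate the geodesic flow in closed form, identify the minimizing set (the cut locus), and estimate the Jacobian of $v\mapsto\exp_o(rv)$ on that set.

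First I would integrate the geodesic equations. The $x$- and $y$-translations are isometries, so the momenta $\alpha=e^{-2z}x'$ and $\beta=e^{2z}y'$ are conserved along a unit-speed geodesic. Unit speed then gives
\[
z'^2=1-\alpha^2e^{2z}-\beta^2e^{-2z}.
\]
This reduces the motion to one degree of freedom. If $\alpha\beta\neq0$, the height oscillates periodically between two turning heights $z_\pm$ with $e^{z_+}\approx1/|\alpha|$ and $e^{-z_-}\approx1/|\beta|$. The quantities $x$ and $y$ are then obtained by quadrature from $x'=\alpha e^{2z}$ and $y'=\beta e^{-2z}$. The initial unit vector is parametrized by $(\alpha,\beta)$ in the disk $\alpha^2+\beta^2\le1$, together with the sign of $z'(0)$.

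The second step, which I expect to be the main obstacle, is to determine which geodesics minimize up to time $r$. I would try to show that a geodesic stops minimizing once its $z$-coordinate has completed half an oscillation. The mechanism is the symmetry $(x,y,z)\mapsto(-x,-y,z)$ composed with a suitable translation: it maps one geodesic to another with the same endpoint after the turning point. This would exhibit conjugate or cut points and agrees with Grayson's description of Sol geodesics. Consequently every minimizing geodesic of length $r$ is monotone in $z$ except for at most one turn. This puts $|z|\le r$ on $S(o,r)$ and restricts the parameters to $|\alpha|\gtrsim e^{-r}$ or $|\beta|\gtrsim e^{-r}$ in explicit regions.

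The third step estimates the area. On the minimizing set I would compute the differential of $(\alpha,\beta,\pm)\mapsto\exp_o(rv)$ from the quadrature formulas. A cleaner alternative is to use the Jacobi equation along the geodesic, with curvatures bounded by $1$. The expected bound is an area element of at most $Ce^{r}\,d\alpha\,d\beta$, coming from the dominant $e^{2z}$ or $e^{-2z}$ stretching near the top or bottom height, weighted against the metric factor. Integrating over the disk of $(\alpha,\beta)$, which has area $\pi$, with the two signs of $z'(0)$ and the two dominant regimes ($x$- versus $y$-stretching), I would aim to obtain the constant $20\pi$. The threshold $r_0$ absorbs the lower-order corrections that occur for small $r$.
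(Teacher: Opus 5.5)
The paper does not prove this statement; it quotes Schwartz's theorem and uses it only through Lemma~\ref{lem:sol-growth}. Your strategy is reasonable in principle: parametrize $S(o,r)$ by the initial directions still minimizing at time $r$, then bound the Jacobian of $v\mapsto\exp_o(rv)$. However, two load-bearing steps fail.

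First, Step~2 is false: minimizing geodesics from $o$ can turn twice. Let $E=(X,Y,0)$ with $|X|=|Y|=e^s$. The coordinate path $o\to(0,0,s)\to(X,0,s)\to(X,0,-s)\to(X,Y,-s)\to E$ has length $4s+2$. Along any unit-speed path of length $L$ one has $e^{-z}|x'|\le1$ and $e^{z}|y'|\le1$, hence $|X|\le e^{z_{\max}}L$ and $|Y|\le e^{-z_{\min}}L$. So for large $s$ a minimizing geodesic to $E$ goes above and below height $0$ and returns to $0$, and $z'$ changes sign at least twice. The symmetry you have in mind does produce cut points, but only after a \emph{full} $z$-period $P(\alpha,\beta)$. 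Let $\iota(x,y,z)=(-x,-y,z)$ and let $E$ be the point reached at time $P$. The isometry $q\mapsto E\,\iota(q)$ swaps $o$ and $E$, and it carries the geodesic with data $(\alpha,\beta,+)$ on $[0,P]$ to the time-reversal of the one with data $(\alpha,\beta,-)$. So the cut time is at most $P$, not half a period; Coiculescu and Schwartz (\emph{The spheres of Sol}) show it equals $P$. A domain restricted to geodesics with at most one turn therefore misses part of $S(o,r)$, and integrating over it does not bound $A(r)$.

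Second, Step~3 contains no argument, and the tools you name cannot give the rate $e^r$. The sectional curvatures of $\mathrm{Sol}$ lie in $[-1,1]$, so Jacobi-field comparison gives only $A(r)\le4\pi\sinh^2 r\approx\pi e^{2r}$. Your picture of a Jacobian $\le Ce^r\,d\alpha\,d\beta$ integrated over the whole disk is also inconsistent. Substituting $z=w+\tfrac12\log|\beta/\alpha|$ gives $z'^2=1-2|\alpha\beta|\cosh 2w$. Hence $P$ depends only on $|\alpha\beta|$, and $P\le2\log(1/|\alpha\beta|)+O(1)$. The directions still minimizing at time $r$ therefore satisfy the \emph{upper} bound $|\alpha\beta|\lesssim e^{-r/2}$, not the lower bounds you wrote. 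This set has $(\alpha,\beta)$-area $O(re^{-r/2})$. A Jacobian bound $Ce^r$ on it would give $A(r)=O(re^{r/2})$. That contradicts $\int_0^rA(s)\,ds=\mu(B_d(o,r))\ge\mu(Q_{r-3})=4e^{r-3}$ for $r\ge3$, from Lemma~\ref{lem:sol-terminal-box}. So the Jacobian must be much larger than $e^r$ on the relevant set. Controlling it uniformly near $\alpha\beta=0$, where the period blows up, is the actual content of Schwartz's theorem; so is extracting the constant $20\pi$, which you only assert.

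If your aim is to make the paper self-contained, note that it needs only $\mu(B_d(o,r))\le Ce^r$, with any constant but without polynomial loss. A direct path estimate in the spirit of Lemma~\ref{lem:bs-digit-growth} is far more tractable. For a path of length $\le r$ ending at height $z$, set $E=r-2(z_{\max}-z_{\min})+|z|\ge0$. Then $\int(e^{-z}x')^2\,ds\le2E$ and $\int e^{2z}\,ds\lesssim(1+E)e^{2z_{\max}}$. Cauchy--Schwarz gives $|x|\lesssim(1+E)e^{z_{\max}}$, and similarly $|y|\lesssim(1+E)e^{-z_{\min}}$. Summing over the heights then absorbs the polynomial factors.
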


We now deduce the corresponding volume estimate.
\begin{lemma}[Ball growth bound]\label{lem:sol-growth}
There is a constant $C_{\Sol}\ge1$ such that
$$
 \mu(B_d(o,r))\le C_{\Sol}e^r\qquad(r\ge0).
$$
\end{lemma}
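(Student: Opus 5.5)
Since the metric \eqref{eq:sol-metric} is complete and smooth, the volume of a ball can be computed by the coarea formula applied to the distance function $\rho(q)=d(o,q)$, which is Lipschitz with $|\nabla\rho|=1$ almost everywhere. This gives
\[
 \mu(B_d(o,r))=\int_0^r A(s)\,ds ,
\]
where $A(s)$ is the $2$-dimensional Hausdorff (Riemannian) measure of the sphere $\{\rho=s\}$, which is the area in Theorem~\ref{th-sch}. I will split the integral at the radius $r_0$ supplied by Theorem~\ref{th-sch}.

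For $r\ge r_0$, I write
\[
\mu(B_d(o,r))=\mu(B_d(o,r_0))+\int_{r_0}^r A(s)\,ds
\le \mu(B_d(o,r_0))+20\pi\int_{r_0}^r e^s\,ds
\le \mu(B_d(o,r_0))+20\pi e^r .
\]
For $0\le r\le r_0$, I use monotonicity: $\mu(B_d(o,r))\le\mu(B_d(o,r_0))$. The ball $B_d(o,r_0)$ is a closed bounded set in a complete Riemannian manifold, so by Hopf--Rinow it is compact and $\mu(B_d(o,r_0))<\infty$. Put $K=\mu(B_d(o,r_0))$. Since $e^r\ge1$, in both ranges
\[
\mu(B_d(o,r))\le K+20\pi e^r\le (K+20\pi)e^r .
\]
So $C_{\Sol}=\max\{1,K+20\pi\}$ works.

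The only step needing care is the coarea identity itself. Spheres need not be smooth surfaces, because of cut points. I would use the Lipschitz coarea formula, which requires only $|\nabla\rho|=1$ almost everywhere. This holds away from the cut locus of $o$ together with $o$ itself, and that set has measure zero. I expect this to be consistent with how Schwartz defines $A(r)$, namely as the Riemannian area of the sphere, that is, its $2$-dimensional Hausdorff measure. If one prefers to avoid coarea, the weaker inequality $\frac{d}{dr}\mu(B_d(o,r))\le A(r)$ for almost every $r$ suffices for the same bound, since $r\mapsto\mu(B_d(o,r))$ is monotone and locally Lipschitz.
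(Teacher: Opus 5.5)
Your proposal is correct and follows the paper's proof: you apply the coarea formula to the distance from $o$, use Schwartz's bound $A(s)\le 20\pi e^s$ for $s\ge r_0$, and absorb the range $0\le r\le r_0$ into the constant. Your added justifications, namely the Lipschitz coarea formula with $|\nabla\rho|=1$ almost everywhere and the finiteness of $\mu(B_d(o,r_0))$ via Hopf--Rinow, supply details that the paper leaves implicit.
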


\begin{proof}
Let
$
V(r)=\mu(B_d(o,r)).
$
By the coarea formula (see \cite[Theorem~3.2.22]{Fed}) and Theorem \ref{th-sch}, 
\[
V(r)-V(r_0)
 =\int_{r_0}^r A(s)\,ds
 \le 20\pi\int_{r_0}^r e^s\,ds
 \le 20\pi e^r
 \qquad (r\ge r_0).
\]
After increasing the constant to cover \(0\le r\le r_0\), we obtain
\[
V(r)\le C_{\mathrm{Sol}}e^r
\qquad (r\ge0).
\]
\end{proof}

\subsection{The sets $H_N$ and $V_N$}\label{sec:sol-counterexample}

\begin{lemma}\label{lem:sol-terminal-box}
For $t\ge0$, let
\[
 Q_t=[-e^t,e^t]\times[-1,1]\times[t,t+1].
\]
Then $Q_t\subset B_d(o,t+3)$ and $\mu(Q_t)=4e^t$.
\end{lemma}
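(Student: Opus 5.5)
The volume identity is immediate, since $d\mu=dx\,dy\,dz$. The box $Q_t$ has side lengths $2e^t$, $2$, and $1$, so $\mu(Q_t)=4e^t$. For the inclusion, I will take an arbitrary point $(a,b,c)\in Q_t$, so that $|a|\le e^t$, $|b|\le1$ and $t\le c\le t+1$. I will then exhibit an explicit piecewise smooth curve from $o$ to it of length at most $t+3$, using the length formula \eqref{eq:sol-length}. The guiding idea is to do each horizontal motion at the height where it is cheap. The $y$-motion is cheap at height $0$, where a $y$-segment of size $|b|$ costs $e^0|b|$. The $x$-motion is cheap at height near $t$, where an $x$-segment of size $|a|$ costs $e^{-z}|a|$.

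The path will have three pieces.
\begin{enumerate}
\item At height $0$, move in $y$ from $(0,0,0)$ to $(0,b,0)$. Its length is $|b|\le1$.
\item Move vertically from $(0,b,0)$ to $(0,b,c)$. Its length is $c\le t+1$.
\item At height $c$, move in $x$ from $(0,b,c)$ to $(a,b,c)$. Its length is $e^{-c}|a|\le e^{-t}e^t=1$, using $c\ge t$.
\end{enumerate}
The total length is at most $1+(t+1)+1=t+3$. Hence $d(o,(a,b,c))\le t+3$, because the Riemannian distance is the infimum of lengths of such curves. This proves $Q_t\subset B_d(o,t+3)$.

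I do not expect a real obstacle here. The only point needing care is the ordering of the moves. Doing the $y$-motion at height $0$, before climbing, avoids the factor $e^{c}$ that a $y$-segment would cost at height $c$. Doing the $x$-motion after climbing uses $c\ge t$ to kill the factor $e^{t}$.
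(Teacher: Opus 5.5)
Your proposal is correct and is essentially the paper's own proof. The paper uses the same three-segment path $(0,0,0)\to(0,b,0)\to(0,b,c)\to(a,b,c)$, with lengths $|b|$, $c$, and $e^{-c}|a|$ summing to at most $t+3$, and computes $\mu(Q_t)=4e^t$ from the side lengths $2e^t$, $2$, $1$ in the same way.
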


\begin{proof}
Given $(a,b,c)\in Q_t$, join $o$ to it by three segments:
\[
 (0,0,0)\longrightarrow(0,b,0)
 \longrightarrow(0,b,c)\longrightarrow(a,b,c).
\]
Their lengths, computed using \eqref{eq:sol-length}, are
$|b|$, $c$, and $e^{-c}|a|$, respectively. Since $c\in[t,t+1]$,
the total length is at most
\[
 1+(t+1)+e^{-c}e^t\le t+3.
\]
Hence $d(o,(a,b,c))\le t+3$, so 
$Q_t \subset B_d(o,t+3)$.

The three side lengths of \(Q_t\) are \(2e^t\), \(2\), and \(1\). Since \(d\mu=dx\,dy\,dz\), we have
$\mu(Q_t)=4e^t$.
\end{proof}

For $N\ge1$, define the compact sets
\begin{align*}
 H_N&=[-2e^N,2e^N]\times[-2,2]\times[N,N+1],
       \label{eq:sol-FN}\\
 V_N&=[-e^N,e^N]\times[-1,1]\times[0,N].
\end{align*}
Their volumes are
\begin{equation}\label{eq:sol-sizes}
 \mu(H_N)=16e^N,\quad
 \mu(V_N)=4Ne^N,\quad
 \frac{\mu(V_N)}{\mu(H_N)}=\frac N4.
\end{equation}

\begin{proposition}\label{prop:sol-fixed-level}
Set $c_{\Sol}=4/(C_{\Sol}e^3)>0$. For every $N\ge1$,
$$
 M_d\one_{H_N}(q)\ge c_{\Sol}\qquad(q\in V_N).
$$
\end{proposition}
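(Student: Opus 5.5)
Fix $q=(x_0,y_0,z_0)\in V_N$, so $|x_0|\le e^N$, $|y_0|\le1$, $0\le z_0\le N$, and put $t=N-z_0\ge0$. The plan mirrors Propositions~\ref{prop:sws} and~\ref{prop:bs-digit}. We use the radius $r=t+3$ and show that the left translate $qQ_t$ of the terminal box from Lemma~\ref{lem:sol-terminal-box} lies inside $H_N$.

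By Lemma~\ref{lem:sol-terminal-box}, $Q_t\subset B_d(o,t+3)$. Since left translations are isometries, $qQ_t\subset qB_d(o,t+3)=B_d(q,t+3)$. We now compute $qQ_t$ using \eqref{eq:sol-left-translation}. A point $(a,b,c)\in Q_t$ is sent to
\[
(x_0+e^{z_0}a,\ y_0+e^{-z_0}b,\ z_0+c).
\]
The third coordinate lies in $[z_0+t,z_0+t+1]=[N,N+1]$. For the first coordinate,
\[
|x_0+e^{z_0}a|\le e^N+e^{z_0}e^t=2e^N.
\]
For the second, since $z_0\ge0$ and $|b|\le1$,
\[
|y_0+e^{-z_0}b|\le 1+1=2.
\]
Hence $qQ_t\subset H_N\cap B_d(q,t+3)$. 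This is exactly why $H_N$ was chosen with doubled horizontal dimensions relative to $V_N$.

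Left translation preserves $\mu$, so $\mu(H_N\cap B_d(q,t+3))\ge\mu(qQ_t)=\mu(Q_t)=4e^t$. By \eqref{eq:sol-homogeneity} and Lemma~\ref{lem:sol-growth},
\[
\mu(B_d(q,t+3))=\mu(B_d(o,t+3))\le C_{\Sol}e^{t+3}.
\]
Taking this single radius in the supremum defining $M_d$ gives
\[
M_d\one_{H_N}(q)\ge\frac{4e^t}{C_{\Sol}e^{t+3}}=\frac{4}{C_{\Sol}e^3}=c_{\Sol}.
\]

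No step is a serious obstacle. The substantive input, pure exponential growth of balls, is already contained in Lemma~\ref{lem:sol-growth}, which rests on Schwartz's theorem. The only point needing care is the bookkeeping of the horizontal scalings $e^{z_0}$ and $e^{-z_0}$ under translation. The $x$-direction expands by exactly $e^{z_0}$, which the choice $|a|\le e^t=e^{N-z_0}$ compensates. The $y$-direction contracts because $z_0\ge0$, which is where the restriction of $V_N$ to nonnegative heights is used.
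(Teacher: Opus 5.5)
Your proposal is correct and follows the paper's proof essentially step for step. It uses the same radius $t+3$, the same containment $qQ_t\subset H_N\cap B_d(q,t+3)$ obtained from the left-translation formula, and the same ratio $4e^t/(C_{\Sol}e^{t+3})=c_{\Sol}$.
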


\begin{proof}
Fix \(q=(x_0,y_0,z_0)\in V_N\) and set
$
t=N-z_0\ge 0.
$

We first check that \(qQ_t\subset H_N\). For \((a,b,c)\in Q_t\),
formula \eqref{eq:sol-left-translation} and the definitions of \(V_N\)
and \(Q_t\) give
\[
|x_0+e^{z_0}a|
 \le e^N+e^{z_0}e^t
 =2e^N,
\]
\[
|y_0+e^{-z_0}b|
 \le 1+e^{-z_0}
 \le 2,
\]
\[
z_0+c\in[z_0+t,z_0+t+1]=[N,N+1].
\]
Thus
$
qQ_t\subset H_N.
$

By Lemma~\ref{lem:sol-terminal-box} and left invariance of the metric,
$
qQ_t\subset B_d(q,t+3).
$
Hence
\[
qQ_t\subset (H_N\cap B_d(q,t+3)).
\]
Moreover, left invariance of volume gives
\[
\mu(qQ_t)=\mu(Q_t)=4e^t.
\]

By \eqref{eq:sol-homogeneity} and Lemma~\ref{lem:sol-growth},
\[
\mu(B_d(q,t+3))
 =\mu(B_d(o,t+3))
 \le C_{\mathrm{Sol}}e^{t+3}.
\]
Therefore
\[
M_d{\mathbf 1}_{H_N}(q)
 \ge
 \frac{\mu(H_N\cap B_d(q,t+3))}
      {\mu(B_d(q,t+3))}
 \ge
 \frac{4e^t}{C_{\mathrm{Sol}}e^{t+3}}
 =
 \frac{4}{C_{\mathrm{Sol}}e^3}
 =
 c_{\mathrm{Sol}}.
\]
\end{proof}

\begin{proof}[Proof of Theorem~\ref{thm:sol}]
By Proposition~\ref{prop:sol-fixed-level},
\[
M_d{\mathbf 1}_{H_N}\ge c_{\mathrm{Sol}}
\quad\text{on }V_N,
\]
while \eqref{eq:sol-sizes} gives
\[
\frac{\mu(V_N)}{\mu(H_N)}
=\frac{N}{4}\longrightarrow\infty.
\]
Thus Lemma~\ref{lem:fixed-level} applies with \(c=c_{\mathrm{Sol}}\), and proves the theorem.
\end{proof}

\begin{corollary}[Sol with general left-invariant Riemannian metric]\label{cor:sol-all-riemannian}
For every left-invariant Riemannian metric on $\mathrm{Sol}$,
with its Riemannian volume measure, the centered Hardy--Littlewood
maximal operator is not of weak type $(p,p)$ for any
$1\le p<\infty$.
In particular, it is unbounded on $L^p$ for every finite $p\ge1$.
\end{corollary}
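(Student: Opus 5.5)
The strategy is to reduce to Theorem~\ref{thm:sol} through bi-Lipschitz comparison and pure exponential volume growth. The argument is not simply ``bi-Lipschitz metrics give comparable maximal operators,'' since comparing balls of radius $r$ and $Lr$ costs a factor $e^{(L-1)r}$. Instead I would rerun the mechanism of Lemma~\ref{lem:fixed-level} directly for an arbitrary left-invariant metric $g$, using the same sets $H_N,V_N$, perhaps rescaled. Two facts are needed.

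\textbf{Upper volume bound.} The first fact is $\mu_g(B_g(o,r))\le C e^{\kappa r}$ for the natural exponent $\kappa$ of $g$. Any left-invariant metric is determined by an inner product on the Lie algebra with basis $X=e^z\partial_x$, $Y=e^{-z}\partial_y$, $Z=\partial_z$. After an automorphism of Sol one can hope to normalize this inner product. Automorphisms include the diagonal scalings $(x,y,z)\mapsto(\alpha x,\beta y,z)$ and conjugations; the latter change the component of $Z$ along the horizontal directions, and these maps are isometries between the respective left-invariant metrics. One cannot rescale $Z$ by an automorphism, however, since the height homomorphism is canonical up to sign. Hence I expect the normalization to reach
\[
ds^2=e^{-2z}dx^2+e^{2z}dy^2+2\,\text{(cross term between }dx\text{ and }e^{2z}dy\text{ possibly)}+\lambda^2dz^2 .
\]
Killing all cross terms is the delicate part. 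The cleaner route is to avoid normalization altogether and prove a growth bound of the form $\mu(B_g(o,r))\le Ce^{r/\lambda}$ directly. Here $\lambda$ is the $g$-length of the unit height direction orthogonal to the horizontal subalgebra; equivalently, $1/\lambda=\|dh\|_g$, the dual norm of the height form.

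\textbf{The main obstacle.} Proving this sharp exponent (up to constants, not merely up to $e^{\epsilon r}$) for a general metric is where I expect the difficulty. Schwartz's theorem covers only the standard metric. My first attempt: after scaling the horizontal directions by automorphisms and applying the isometry $(x,y,z)\mapsto(x,y,z)$, show $g$ is comparable up to a multiplicative constant in the \emph{horizontal} directions only, while the vertical speed is controlled exactly by $|dz|\le \|dh\|_g\,ds_g$. Then $B_g(o,r)$ lies in the set where $|z|\le r/\lambda$ and $|x|,|y|$ are bounded as in the standard Sol ball of radius $r/\lambda+C$. Using the standard metric rescaled by $\lambda$ in $z$ (that is, Sol with $z$-scale $\lambda$, a metric with growth $e^{r/\lambda}$), the box-type estimate used for Schwartz's bound gives the desired volume.

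\textbf{Lower bound and conclusion.} The second fact is the terminal-box inclusion, which transfers easily. The path in Lemma~\ref{lem:sol-terminal-box} consists of a horizontal $y$-segment, a vertical segment, and an $x$-segment. In $g$, moving vertically by $t$ along the $g$-optimal direction (a one-parameter subgroup through the height-gradient, which after a horizontal conjugation is $\partial_z$) costs $\lambda t$. The horizontal segments cost $O(1)$, so $Q_t\subset B_g(o,\lambda t+C)$ with $\mu(Q_t)\asymp e^t$. Since $\mu_g$ is a constant multiple of $dx\,dy\,dz$, combining with the upper bound gives
\[
\mu(Q_t)/\mu_g(B_g(o,\lambda t+C))\ge c>0
\]
uniformly in $t$. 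The proof of Proposition~\ref{prop:sol-fixed-level} then goes through verbatim with $H_N,V_N$ as defined, and Lemma~\ref{lem:fixed-level} concludes, since $\mu(V_N)/\mu(H_N)=N/4\to\infty$ is metric-independent.
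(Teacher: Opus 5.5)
Your overall architecture is sound and differs from the paper's, and the lower-bound half is fine. Conjugating the one-parameter subgroup generated by the unit $g$-gradient of the height by a horizontal element gives $d_g(o,(0,0,c))\le\lambda c+C$. Hence $Q_t\subset B_g(o,\lambda t+C)$, and the sets $H_N,V_N$ work unchanged.

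The gap is the upper bound $\mu(B_g(o,r))\le Ce^{r/\lambda}$. You rightly call this the crux, but you do not prove it, and the route you sketch would not give it. Confining $B_g(o,r)$ to a region defined by separate bounds on $|z|$, $|x|$, $|y|$ and then applying a ``box-type estimate'' loses a polynomial factor. Already for the standard ball of radius $R$, the $x$- and $y$-ranges at height $z$ are of size about $e^{(R+z)/2}$ and $e^{(R-z)/2}$. Their product, integrated over $|z|\le R$, is of order $Re^R$, while the ball itself has volume of order $e^R$. Since the lower bound is only $\mu(Q_t)\asymp e^t$, any such loss destroys the argument; compare the remark accompanying Lemma~\ref{lem:bs-digit-growth}. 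Schwartz's theorem is not a box estimate: the pure exponential rate reflects the hyperbolic shape of the horizontal slices of the ball, which coordinate bounds cannot see.

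The repair uses your own two ingredients but concludes by containment of balls rather than by coordinate bounds.

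\textbf{Normalization.} Conjugation by $\exp(uX+vY)$ fixes $X$ and $Y$, sends $Z\mapsto Z-uX+vY$, and preserves $\mu$ by unimodularity. Choose it so that $Z$ is $g$-orthogonal to $\mathfrak h=\mathrm{span}(X,Y)$. Then use a diagonal automorphism to get $|X|_g=|Y|_g=1$. This yields $g=e^{-2z}dx^2+e^{2z}dy^2+2\rho\,dx\,dy+\lambda^2dz^2$ with $|\rho|<1$; note the cross term has constant coefficient.

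\textbf{Comparison.} The term $\rho$ need not be removed. Since $\xi^2+\eta^2+2\rho\xi\eta\ge(1-|\rho|)(\xi^2+\eta^2)$, one has $g\ge\lambda^2\phi^*g_0$. Here $g_0$ is the metric \eqref{eq:sol-metric}, and $\phi(x,y,z)=(\kappa x/\lambda,\kappa y/\lambda,z)$ with $\kappa=\sqrt{1-|\rho|}$ is an automorphism of $\mathrm{Sol}$.

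\textbf{Volume bound.} Hence $B_g(o,r)\subset\phi^{-1}(B_d(o,r/\lambda))$, and Lemma~\ref{lem:sol-growth} gives $\mu(B_g(o,r))\le(\lambda/\kappa)^2C_{\mathrm{Sol}}e^{r/\lambda}$. Automorphisms multiply $\mu$ by constants, so this bound transfers back to the original metric.

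Completed this way, your proof is more elementary than the paper's. The paper quotes the rough-similarity theorem \cite[Theorem B]{LeDonne} to get $|\widetilde d-d|\le A$ after rescaling, which yields both ball inclusions at once. You need only a one-sided comparison for the upper bound and an explicit path for the lower bound. Nothing beyond the automorphisms of $\mathrm{Sol}$ and Schwartz's theorem is used.
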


\begin{proof}
Let $\widetilde d$ be a left-invariant Riemannian distance on
$\mathrm{Sol}$, and let $d$ be the standard distance used above.
By \cite[Theorem B]{LeDonne}, after rescaling
$\widetilde d$, there is a constant $A\ge0$ such that
\[
  |\widetilde d(x,y)-d(x,y)|\le A
  \quad (x,y\in\mathrm{Sol}).
\]

Rescaling the distance does not change the maximal operator.
The Riemannian volume measures are constant multiples of one
another, since they are both left Haar measures, which are unique up to multiplicative factors. So we use the same measure $\mu$.

Fix $q=(x,y,z)\in V_N$ and put $t=N-z$.
The construction in Proposition~\ref{prop:sol-fixed-level} gives
\[
  qQ_t\subset H_N\cap B_d(q,t+3),
  \quad \mu(qQ_t)=4e^t.
\]
Since
\[
  B_d(q,t+3)
  \subset B_{\widetilde d}(q,t+3+A)
  \subset B_d(q,t+3+2A),
\]
Lemma~\ref{lem:sol-growth} gives
\[
  M_{\widetilde d}\mathbf{1}_{H_N}(q)
  \ge
  \frac{4e^t}{C_{\mathrm{Sol}}e^{t+3+2A}}
  =
  \frac{4}{C_{\mathrm{Sol}}e^{3+2A}}.
\]
This lower bound is independent of $N$ and $q$.
Since $\mu(V_N)/\mu(H_N)=N/4\to\infty$,
Lemma~\ref{lem:fixed-level} proves the result.
\end{proof}

\section{Further remarks}\label{sec:remarks}

\subsection{Failure of endpoint estimates}
For each metric in Theorems~\ref{thm:lamplighter}, \ref{thm:baumslag} and \ref{thm:sol}, the
proof gives sets $H_N,V_N$ and a constant $c>0$ such that
\[
 M_d\one_{H_N}\ge c\quad\hbox{on }V_N,
 \quad \frac{\mu(V_N)}{\mu(H_N)}\longrightarrow\infty.
\]

Since $c$ does not depend on $N$,  for any fixed nondecreasing finite-valued function  $\Phi:[0,\infty)\to[0,\infty)$ 
with $\Phi(0)=0$, 
there is no constant \(C<\infty\) such that
\begin{equation}\label{eq:orlicz}
 \mu\{M_df>\lambda\}
 \le C\int_X\Phi\bigl(|f|/\lambda\bigr)\,d\mu
\end{equation}
for all \(f\) and \(\lambda>0\).
Indeed, set $f=\one_{H_N}$ and
$\lambda=c/2$. The left-hand side is at least $\mu(V_N)$, while the
right-hand side is $C\Phi(2/c)\mu(H_N)$. 
Hence, for sufficiently large $N$, \eqref{eq:orlicz} fails with this choice of \(f\) and \(\lambda\).

Now, take 
\[
 \Phi_\beta(t)=t(1+\log^+t)^\beta\quad(t>0),\quad
 \Phi_\beta(0)=0,
\]
for any fixed $\beta\ge0$, where
$\log^+t=\max\{\log t,0\}$.
This choice implies that the \(L(\log L)^\beta\) endpoint estimate for \(M_d\), namely \(\eqref{eq:orlicz}\) with \(\Phi=\Phi_\beta\), fails for every \(\beta\ge0\).

\subsection{Role of the metric}
Theorems~\ref{thm:lamplighter} and~\ref{thm:baumslag} use the particular
generating sets $S_{\mathrm{sws}}$ and $D_n$. Changing a finite generating set
changes the word metric up to a multiplicative constant. Thus a ball of
radius $r$ for one word metric is contained in a ball of radius $Cr$ for
another. For groups of exponential growth, the latter ball can be much
larger. Therefore the estimates in our proofs do not automatically pass to
another generating set. It remains open whether the same failure holds for
every finite generating set of $\mathcal L$ or $\mathrm{BS}(1,n)$.

By contrast, for $\mathrm{Sol}$, Corollary~\ref{cor:sol-all-riemannian} shows that the
same failure holds for every left-invariant Riemannian metric.

Finally, every centered ball is also allowed in the uncentered maximal
operator. Hence the uncentered maximal operator is at least as large as the
centered one, and has the same failures for all the metrics considered in
this paper.

\bigskip

\medskip
\noindent{\bf Acknowledgements.}
The author  thanks the Mathematical Institute, University of Oxford, for its hospitality during the preparation of this work.

 The author used ChatGPT (GPT-5.6 Sol) to help organize and carry out the estimates in the proof of Lemma~\ref{lem:bs-digit-growth}. All mathematical content is the responsibility of the author.

\bigskip

\noindent
\textsc{Koji Fujiwara}\\
Okinawa Institute of Science and Technology Graduate University,
Okinawa 904-0495, Japan\\
KUIAS/RIMS, Kyoto University, Kyoto, 606-8305, Japan\\
\texttt{koji.fujiwara@oist.jp; kfujiwara@math.kyoto-u.ac.jp}

\end{document}